\newtheorem{conj}{Conjecture}
\newcommand{\qedbox}{\hfill \ensuremath{\Box}}
\begin{document}
\title{Density-based group testing}

\author{D\'aniel Gerbner\inst{1}\fnmsep\thanks{Research supported in part by the Hungarian NSF,
under contract NK 78439}\and Bal\'azs
Keszegh\inst{1}\fnmsep$^\star$\and D\"om\"ot\"or
P\'alv\"olgyi\inst{2}\fnmsep\thanks{Research supported by
Hungarian NSF, grant number: OTKA CNK-77780} \and G\'abor
Wiener\inst{3}\fnmsep\thanks{Supported in part by the Hungarian
National Research Fund and by the National Office for Research and
Technology (Grant Number OTKA 67651)}}

\institute{R\'enyi Institute of Mathematics, Hungarian Academy of
Sciences \,
\email{\{gerbner.daniel,keszegh.balazs\}@renyi.mta.hu}\and
Department of Computer Science, E\"otv\"os University
\,\email{dom@cs.elte.hu} \and Department of Computer Science and
Information Theory, Budapest University of Technology and
Economics \, \email{wiener@cs.bme.hu}}

\maketitle

\begin{abstract}
\noindent In this paper we study a new, generalized version of the
well-known group testing problem. In the classical model of group
testing we are given $n$ objects, some of which are considered to
be defective. We can test certain subsets of the objects whether
they contain at least one defective element.  The goal is usually
to find all defectives using as few tests as possible. In our
model the presence of defective elements in a test set $Q$ can be
recognized if and only if their number is large enough compared to
the size of $Q$. More precisely for a test $Q$ the answer is {\sc
yes}  if and only if there are at least $\alpha |Q|$ defective
elements in $Q$ for some fixed $\alpha$.
\end{abstract}

\smallskip

{AMS subject classification: 94A50}

\smallskip

{Keywords: Group testing, search, query.}

\section{Introduction}\label{intro}

The concept of group testing was developed in the middle of the previous century. Dorfman, a Swiss physician intended to
test blood samples of millions of soldiers during World War II in order to find those who were
infected by syphilis. His key idea was to test more blood samples
at the same time and learn whether at least one of them are infected \cite{dorfman}. Some fifteen years later R\'enyi developed a theory of search in order to find which electrical part of his car went wrong. In his model -- contrary to Dorfman's one -- not all of the subsets of the possible defectives (electric parts) could be tested \cite{K3}.

Group testing has now a wide variety of applications in areas like DNA screening, mobile networks, software and hardware testing.

In the classical model we have an underlying set $[n]=\{1,\dots,n\}$ and we suppose that there
may be some defective elements in this set. We can test all subsets of
$[n]$ whether they contain at least one defective element. The goal is
to find all defectives using as few tests as possible. One can easily see that in this generality
the best solution is to test every set of size 1. Usually we have some additional information like the exact number of defectives (or some bounds on this number) and it is also frequent that we do not have to find all defectives just some of them or even just to tell something about them.

In the case when we have to find a single defective it is well-known that the information
theoretic lower bound is sharp: the number of questions
needed in the worst case is $\lceil \log n \rceil$, which can be achieved by binary search.

Another well-known version of the problem is when the maximum size of a test
is bounded. (Motivated by the idea that too large tests are not supposed to be reliable,
because a small number of defectives may not be recognized
there). This version can be solved easily in the adaptive case, but is much more difficult in the non-adaptive case. This latter version was first posed by R\'enyi. Katona \cite{Katona} gave an algorithm to find the exact solution to R\'enyi's problem and he also proved the best known lower bound on the number of queries needed. The best known upper bound is due to Wegener \cite{wegener}.

In this paper we assume that the presence of defective elements in a test set $Q$ can be recognized
if and only if their number is large enough compared to the size of
$Q$. More precisely for a test $ Q \subseteq [n]$ the
answer is {\sc yes}  if and only if there are at least $\alpha |Q|$
defective elements in $Q$. Our goal is to find at least $m$
defective elements using tests of this kind.

\begin{definition} Let $g(n,k,\alpha,m)$ be the least number of questions
needed in this setting, i.e. to find $m$ defective elements in an
underlying set of size $n$ which contains at least $k$ defective
elements, where the answer is {\sc yes} for a question $ Q \subseteq [n]$
if and only if there are at least $\alpha |Q|$ defective elements
in $Q$.
\end{definition}

We suppose throughout the whole paper that $1\le m\le k$ and
$0<\alpha<1$. Let $a=\lfloor \frac{1}{\alpha}\rfloor$, that is,
$a$ is the largest size of a set where the answer {\sc no}  has
the usual meaning, namely that there are no defective elements in
the set. It is obvious that if a set of size greater than
$k/\alpha$ is asked then the answer is automatically {\sc no}, so
we will suppose that question sets has size at most $k/\alpha$.
All logarithms appearing in the paper are binary.

It is worth mentioning that a similar idea appears in a paper by Damaschke \cite{damaschke}
and a follow-up paper by De Bonis, Gargano, and Vaccaro
\cite{debonis}. Since their motivation is to study the concentration of
liquids, their model deals with many specific properties
arising in this special case and they are
interested in the number of merging operations or the number of
tubes needed in addition to the number of tests.

If $k=m=1$, then the problem is basically the same as the usual
setting with the additional property that the question sets can
have size at most $a$: this is the above mentioned problem of
R\'enyi. As we have mentioned, finding the optimal non-adaptive
algorithm, or even just good bounds is really hard even in this
simplest case of our model, thus in this paper we deal only with
adaptive algorithms.

In the next section we give some upper and lower bounds as well
as some conjectures depending on the choices of $n$, $k$, $\alpha$,
and $m$. In the third section we prove our main theorem, which
gives a general lower and a general upper bound, differing only by a constant
depending only on $k$. In the fourth section we consider some
related questions and open problems.

\section{Upper and lower bounds}\label{mbarmi}

First of all it is worth examining how binary search, the most basic algorithm of search theory works in our setting. It is easy to see that it does not work in general, not even for $m=1$. If (say) $ k=2$ and $ \alpha = 0.1$, then question sets have at most 20 elements (recall that we supposed that there are no queries containing more than $k/\alpha$ elements, since they give no information at all, because the answer for them is always {\sc no}), thus if $n$ is big, we cannot perform a binary search.

However, if $k\geq n\alpha$, then binary search can be used.

\begin{theorem}\label{gyors} If $\alpha \le k/n$, then $g(n,k,\alpha,m) \le \lceil\log n\rceil
+c$, where $c$ depends only on $\alpha$ and $m$, moreover if
$m=1$, then $c=0$.

\end{theorem}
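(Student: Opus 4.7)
The plan is to run a density-preserving binary search and, when $m>1$, finish off a bounded-size residue set by brute force. Throughout I maintain the invariant that the current candidate set $S$ contains at least $\alpha|S|$ defectives. Initially $S=[n]$, and the hypothesis $k\ge\alpha n$ supplies the invariant.

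For one halving step I split $S$ into halves $S_1,S_2$ of sizes $\lceil|S|/2\rceil$ and $\lfloor|S|/2\rfloor$ and test $S_1$. If the answer is {\sc yes}, then $S_1$ already has at least $\alpha|S_1|$ defectives, so I recurse on $S_1$. If the answer is {\sc no}, then $S_1$ has strictly fewer than $\alpha|S_1|$ defectives, so $S_2$ has strictly more than $\alpha|S|-\alpha|S_1|=\alpha|S_2|$ defectives, and I recurse on $S_2$. Either way a single test preserves the invariant while roughly halving $|S|$.

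For $m=1$ I iterate this step $\lceil\log n\rceil$ times until $|S|=1$; the invariant then forces that singleton to be defective (since $\alpha>0$ and the defective count is an integer), which gives $c=0$. For $m>1$ I halt the halving once $|S|\le 2\lceil m/\alpha\rceil$; because I only halved sets of size strictly greater than $2\lceil m/\alpha\rceil$, the resulting $|S|$ is at least $\lceil m/\alpha\rceil$, so the invariant delivers $\alpha|S|\ge m$ defectives. I then finish by testing the elements of $S$ one at a time as singletons (a singleton query returns {\sc yes} iff that element is defective, since $0<\alpha<1$), stopping as soon as $m$ defectives have been found. This second phase costs at most $2\lceil m/\alpha\rceil$ further queries, so the total is at most $\lceil\log n\rceil+2\lceil m/\alpha\rceil$, giving the claimed $c$ that depends only on $\alpha$ and $m$.

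The only point requiring real care is calibrating the switchover threshold when $m>1$: it must be large enough at the stopping time that the density invariant alone still guarantees $m$ defectives, yet depend only on $\alpha$ and $m$ so that the brute-force finish is absorbed into a constant. Apart from that, the argument is routine, and the edge case where $n$ is already below the threshold is handled directly by the assumption $k\ge m$.
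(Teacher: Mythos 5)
Your proposal is correct and follows essentially the same route as the paper: a binary search that preserves the density invariant $k'/n' \ge \alpha$ at every step (your computation in the {\sc no} case, giving strictly more than $\alpha|S_2|$ defectives in $S_2$, is a clean version of the paper's ceiling manipulation), followed by a constant-cost finish once the candidate set has size bounded in terms of $\alpha$ and $m$. The only difference is cosmetic: you finish the residual set by testing singletons one by one (cost at most $2\lceil m/\alpha\rceil$), whereas the paper bounds the finish by $\max_{n' \le 2m/\alpha} g(n',m,\alpha,m)$; both yield a constant depending only on $\alpha$ and $m$.
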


\begin{proof} We show that binary search can be used to find $m$ defectives. That is,
first we ask a set $F$ of size $\lfloor n/2 \rfloor$ and then the
underlying set is substituted by $F$ if the answer is {\sc yes}
and by $\overline{F}$ if the answer is {\sc no}. We iterate this
process until the size of the underlying set is at most
$2m/\alpha$. Now we check that the condition $\alpha \le k/n$
remains true after each step. Let $n'=\lfloor n/2 \rfloor $ be the
size of the new underlying set and $k'$ be the number of
defectives there. If the answer was {\sc yes}  , then $k' \ge
\alpha n'$, thus $\alpha \le k'/n'$. If the answer was {\sc no},
then there are at least $k- \lceil \alpha n' \rceil +1 $
defectives in the new underlying set, that is $k' \ge k- \lceil
\alpha n' \rceil +1  \ge \alpha n- \lceil \alpha n'  \rceil +1 \ge
\alpha n'$, thus $\alpha \le k'/n'$ again.

Now if $m=1$ we simply continue the binary search until we
find a defective element, altogether using at most $\lceil \log
n\rceil$ questions.

If $m>1$, then we can find $m$ defectives in the last underlying set using at most
$c:=\max_{n' \le 2m/\alpha} g(n',m, \alpha, m)$ further queries.

(Notice that since the size of the last underlying set is greater than $m/\alpha$, it contains at least $m$ defectives.)
This number $c$ does not depend on $k$, just on $\alpha$ and $m$ and it is obvious that
we used at most $\lceil\log n\rceil +c$ queries altogether.
\qedbox \end{proof}

This theorem has an easy, yet very important corollary. If the answer for a question $A$ is {\sc yes}, then there are
at least $\alpha|A|$ defective elements in $A$. If $\alpha|A| \ge
m$, then we can find $m$ of these defectives using $g(|A|,\alpha|A|,\alpha,m)
\le \log |A| +c$ questions, where $c$ depends only on $\alpha$ and
$m$. Basically it means that whenever we obtain a {\sc yes} answer, we can
finish the algorithm quickly.

The proof of Theorem \ref{gyors} is based on the fact that if the ratio of the defective elements $k/n$ is at least
$\alpha$, then this condition always remains true during binary search. If $k/n < \alpha$, then this trick does not work, however if the difference between $k/n$ and $\alpha$ is small, a similar result can be proved for $m=1$. Recall that $a= \lfloor 1/\alpha \rfloor $.

\begin{theorem}\label{felezgetes} If
$k \ge \frac{n}{a}- \lfloor\log \frac{n}{a}\rfloor-1$ and $k\ge 1$, then $g(n,k,\alpha,1) \le
\lceil\log n\rceil +1 $.
\end{theorem}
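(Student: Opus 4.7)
Partition $[n]$ into $g=\lceil n/a\rceil$ consecutive groups of size at most $a$, and run an adaptive recursive procedure combining binary-search-on-groups with a direct terminal phase. Since $\alpha a\le 1$, a query on a union of any $t$ groups has threshold at most $t$, so a ``yes'' answer on such a union certifies defective density at least $1/a\ge\alpha$ in the queried set.

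If the current instance has $g'\ge 5$ groups, I would query $Q$, the union of the first $\lfloor g'/2\rfloor$ groups. In the ``yes'' branch the density of $Q$ is at least $\alpha$, so by Theorem~\ref{gyors} we locate a defective in $\lceil\log|Q|\rceil\le\lceil\log n\rceil-1$ further queries, for a total of at most $\lceil\log n\rceil$. In the ``no'' branch the complement contains $\lceil g'/2\rceil$ groups and at least $k'-\lfloor g'/2\rfloor+1\ge \lceil g'/2\rceil-\lfloor\log g'\rfloor$ defectives; using the elementary inequalities $\lfloor\log\lceil g'/2\rceil\rfloor\ge\lfloor\log g'\rfloor-1$ and $\lceil g'/2\rceil\ge\lfloor\log g'\rfloor+1$ (valid for $g'\ge 5$), the hypothesis of the theorem is preserved on the complement and at least one defective remains, so we may recurse.

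If $g'\le 4$, the recursion terminates with a direct strategy: query the $g'$ groups one by one, binary-searching inside the first ``yes'' group; if $g'-1$ consecutive ``no''s are received, the last unqueried group must contain a defective (since $k'\ge 1$), and we binary-search inside it. This costs at most $(g'-1)+\lceil\log a\rceil\le 3+\lceil\log a\rceil$ queries.

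The main obstacle is the counting: the worst case is the all-``no'' trunk, which makes $t=\lceil\log(g/4)\rceil$ halving steps (bringing $g'$ down to at most $4$) and then incurs $(g'-1)+\lceil\log a\rceil$ terminal queries. One checks by direct case analysis on floors and ceilings that $t+(g'-1)+\lceil\log a\rceil\le\lceil\log n\rceil+1$, exploiting $\lfloor\log g\rfloor+\lceil\log a\rceil\le\lceil\log(ga)\rceil$; the ``yes'' branches are easier because Theorem~\ref{gyors} directly gives the $\lceil\log n\rceil$ bound.
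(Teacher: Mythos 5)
Your overall strategy --- halving on blocks of $a$ elements while tracking how many defectives a {\sc no} answer can eliminate --- is the same one the paper uses, but the final counting step, which you defer to ``direct case analysis,'' is in fact false for the algorithm as you have designed it. Take $\alpha=1/5$ (so $a=5$), $n=61$, $k=9$ (the hypothesis requires $k\ge 61/5-\lfloor\log 12.2\rfloor-1=8.2$). Then $g=\lceil 61/5\rceil=13$, the all-{\sc no} trunk makes $t=2$ halving steps ($13\to 7\to 4$), and your terminal phase at $g'=4$ costs $(g'-1)+\lceil\log a\rceil=3+3=6$ further queries, for a total of $8$, whereas the theorem promises $\lceil\log 61\rceil+1=7$. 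The inequality $t+(g'-1)+\lceil\log a\rceil\le\lceil\log n\rceil+1$ fails here (and for infinitely many similar instances, essentially whenever $g'=4$ at termination and $2^{\lceil\log a\rceil}>\tfrac32 a$), so no amount of floor/ceiling bookkeeping will rescue it.

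The root cause is that your invariant $k'\ge g'-\lfloor\log g'\rfloor-1$ is too weak at small scales: at $g'=4$ it guarantees only one defective (e.g.\ after the step $8\to 4$, where a {\sc no} on four blocks may eliminate three defectives), which forces you into a linear scan of $g'-1$ blocks costing one query more than the budget allows. The paper's proof works with the stronger invariant of Lemma~\ref{felezgetes1}: $2^t$ blocks containing at least $2^t-t$ defectives, which still yields two defectives at four blocks and one defective at two blocks, so the halving can be continued all the way down to a single block of size $a$ at a cost of exactly one query per halving plus $\lceil\log a\rceil$ at the bottom. Since the theorem's hypothesis only supplies the weaker count $2^t-t-1$, the paper pays for the upgrade once, with the single extra query hidden in the ``$+1$'' of the bound: Lemma~\ref{felezgetes2} asks three disjoint quarters so that three {\sc no} answers eliminate at most $3(2^{t-2}-1)$ defectives and leave $2^{t-2}-(t-2)$ in the fourth quarter, restoring the strong invariant. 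To repair your argument you would need an analogous one-time boost (or a terminal phase that continues halving four blocks with two known defectives, costing $2+\lceil\log a\rceil$ rather than $3+\lceil\log a\rceil$), together with a re-done count; as written, the proof does not close.
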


The proof of the theorem is based on the following lemmas.

\begin{lemma}\label{felezgetes1}
Let $t\geq 0$ be an integer. Then  $g(2^ta,2^t-t,\alpha,1) \le
 t + \lceil \log a \rceil $.
\end{lemma}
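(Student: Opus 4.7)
The plan is to proceed by induction on $t$. For the base case $t=0$, the claim reduces to $g(a,1,\alpha,1)\le \lceil\log a\rceil$. Here every subset of the $a$-element universe has size at most $a=\lfloor 1/\alpha\rfloor$, so a query behaves like a classical group-testing query: the answer is {\sc yes} precisely when the unique defective lies in the queried set. Standard binary search then locates the defective in $\lceil\log a\rceil$ queries.

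For the inductive step, assume the result for $t-1$ and consider a universe $U$ of size $2^t a$ containing at least $2^t-t$ defectives. Partition $U$ into two halves $A$ and $B$, each of size $2^{t-1}a$, and query $A$. If the answer is {\sc yes}, then $A$ contains at least $\alpha\cdot 2^{t-1}a$ defectives, so inside $A$ the defective ratio is at least $\alpha$, and Theorem~\ref{gyors} (with $m=1$, for which the additive constant is $0$) finds a defective using at most $\lceil\log(2^{t-1}a)\rceil=t-1+\lceil\log a\rceil$ further queries, totalling $t+\lceil\log a\rceil$. If the answer is {\sc no}, then $A$ has at most $\lceil\alpha\cdot 2^{t-1}a\rceil-1$ defectives, so $B$ retains at least $2^t-t-\lceil\alpha\cdot 2^{t-1}a\rceil+1$ of them; I then invoke the induction hypothesis on $B$ with parameters $(2^{t-1}a,\ 2^{t-1}-(t-1),\alpha,1)$ to finish in at most $(t-1)+\lceil\log a\rceil$ further queries, again totalling $t+\lceil\log a\rceil$.

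The only nontrivial step is verifying that the defective count in $B$ suffices to trigger the induction hypothesis, i.e.\ that $2^t-t-\lceil\alpha\cdot 2^{t-1}a\rceil+1\ge 2^{t-1}-(t-1)$, which simplifies to $\lceil\alpha\cdot 2^{t-1}a\rceil\le 2^{t-1}$. This is the point where the choice $k=2^t-t$ is calibrated: since $a=\lfloor 1/\alpha\rfloor$ gives $\alpha a\le 1$, we have $\alpha\cdot 2^{t-1}a\le 2^{t-1}$, and because $2^{t-1}$ is an integer the ceiling cannot push the value higher. So the {\sc no}-branch loses at most $2^{t-1}-1$ defectives while the required defective budget drops by exactly $2^{t-1}-1$, which is what makes the induction close cleanly. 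I expect this bookkeeping to be the only subtle part of the argument.
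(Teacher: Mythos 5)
Your proof is correct and follows essentially the same route as the paper's: induction on $t$, querying one half of size $2^{t-1}a$, finishing by binary search (via Theorem~\ref{gyors}) on a \textsc{yes}, and recursing on the other half after a \textsc{no}, with the key count $\lceil\alpha\,2^{t-1}a\rceil\le 2^{t-1}$ ensuring the hypothesis for $t-1$ applies. The only differences are cosmetic (you index the step as $t-1\to t$ rather than $t\to t+1$, and you make the ceiling bookkeeping explicit where the paper writes ``less than $\alpha 2^t a\le 2^t$'').
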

\begin{proof}
We use induction on $t$. For $t=0$ and $t=1$ the proposition is
true, since we can perform a binary search on $a$ or $2a$ elements
(by asking sets of size at most $a$ we learn whether they contain
a defective element). Suppose now that the proposition holds for
$t$, we have to prove it for $t+1$. That is, we have an underlying
set of size $2^{t+1}a$ containing at least $2^{t+1}-t-1$
defectives. Our first query is a set $A$ of size $2^ta$. If the
answer is {\sc yes}  , then we can continue with binary search. If
the answer is {\sc no}, then there are less than $\alpha 2^ta \leq
2^t $ defectives in $A$, therefore there are at least
$2^{t+1}-t-1-2^t+1=2^t-t$ defectives in $\overline{A}$. By the
induction hypothesis $g(2^ta,2^t-t,\alpha,1) \le t + \lceil \log a
\rceil $, thus $g(2^{t+1}a,2^{t+1}-t-1,\alpha,1) \le t+1+ \lceil
\log a \rceil $ follows, finishing the proof of the lemma.
\qedbox \end{proof}

\begin{lemma}\label{felezgetes2}
Let $t\geq 2$ be an integer. Then  $g(2^ta,2^t-t-1,\alpha,1) \le
 t + \lceil \log a \rceil+1 $.
\end{lemma}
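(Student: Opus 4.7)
The plan is to prove this by induction on $t \ge 2$. I would treat $t = 2$ as a direct base case and, for $t \ge 3$, perform one halving step, invoking Theorem \ref{gyors} in the {\sc yes} branch and the inductive hypothesis in the {\sc no} branch.

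For $t = 2$ we have $4a$ elements with at least $1$ defective, and the constraint $k = 1$ caps useful queries at size $a$ (since $k/\alpha = 1/\alpha$ forces $|Q| \le a$). I would partition the ground set into four blocks of size $a$ and probe them one by one: a {\sc yes} answer localizes the defective to one block (a size-$a$ query with $\ge \alpha a$ defectives must contain at least one), after which a binary search inside the block -- every intermediate test has size at most $a$ and is therefore admissible -- finishes in $\lceil\log a\rceil$ further queries. The worst case is three {\sc no} answers followed by a binary search in the fourth block, giving $3 + \lceil\log a\rceil = t + \lceil\log a\rceil + 1$.

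For the inductive step ($t \ge 3$) I would query a set $A$ of size $2^{t-1}a$. On {\sc yes}, $A$ has density at least $\alpha$, so Theorem \ref{gyors} (applied with $m=1$) locates a defective in $\lceil\log(2^{t-1}a)\rceil = (t - 1) + \lceil\log a\rceil$ further queries, for a total of $t + \lceil\log a\rceil$. On {\sc no}, $A$ holds at most $2^{t-1} - 1$ defectives, so the complement (of size $2^{t-1}a$) contains at least $(2^t - t - 1) - (2^{t-1} - 1) = 2^{t-1} - (t-1) - 1$ defectives; this matches the hypothesis of Lemma \ref{felezgetes2} at index $t - 1 \ge 2$, so the inductive hypothesis supplies $(t-1) + \lceil\log a\rceil + 1 = t + \lceil\log a\rceil$ further queries, totalling $t + 1 + \lceil\log a\rceil$.

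The main obstacle is the bookkeeping in the {\sc no} branch: one has to verify that halving the ground set while losing at most $2^{t-1} - 1$ defectives to $A$ leaves precisely the count $2^{t-1} - (t-1) - 1$ required to re-enter the lemma at the next smaller index, which is also what pins the base case at $t = 2$ rather than earlier. The $t = 2$ case itself is mildly awkward because the tight cap $k/\alpha = 1/\alpha$ on admissible query sizes rules out a cleaner halving argument and instead forces the explicit sweep through four size-$a$ blocks.
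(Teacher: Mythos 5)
Your proof is correct, but it is organized differently from the paper's. The paper does not induct inside this lemma at all: it asks three disjoint sets of size $2^{t-2}a$, handles any {\sc yes} by binary search in $t-2+\lceil\log a\rceil$ further steps, and after three {\sc no} answers observes that the remaining quarter of size $2^{t-2}a$ contains at least $2^t-t-1-3(2^{t-2}-1)=2^{t-2}-(t-2)$ defectives, so Lemma \ref{felezgetes1} finishes in $t-2+\lceil\log a\rceil$ queries, for a total of $t+1+\lceil\log a\rceil$. You instead run a fresh induction on $t$ with a single halving query per level, never invoking Lemma \ref{felezgetes1}; your defective count $(2^t-t-1)-(2^{t-1}-1)=2^{t-1}-(t-1)-1$ in the {\sc no} branch is right and re-enters your hypothesis at $t-1$, and your base case $t=2$ (a sweep of four size-$a$ blocks costing $3+\lceil\log a\rceil$) is exactly the paper's three-set trick specialized to $t=2$. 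The paper's version buys brevity by reusing the already-proved Lemma \ref{felezgetes1}; yours buys self-containment at the cost of redoing an induction that is essentially parallel to the one in Lemma \ref{felezgetes1}, plus an explicit base case. All the quantitative checks in your argument (the applicability of Theorem \ref{gyors} in the {\sc yes} branch, the bound of at most $2^{t-1}-1$ defectives lost to a {\sc no} answer on a set of size $2^{t-1}a$, and the necessity of stopping the descent at $t=2$) are sound.
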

\begin{proof} Let us start with asking three disjoint sets, each of cardinality $2^{t-2}a$. If the answer to any of these is {\sc yes}, then we can continue with binary search, using $t-2+\lceil \log a\rceil$ additional questions. If all three answers are {\sc no}, then there are at least $2^t-t-1-3(2^{t-2}-1)=2^{t-2}-(t-2)$ defectives among the remaining $2^{t-2}a$ elements, hence we can apply Lemma \ref{felezgetes1}.
\qedbox \end{proof}

\begin{proof}[of Theorem \ref{felezgetes}]
Let us suppose $n>2a$ (otherwise binary search works) and let $t=\lfloor \log \frac{n}{a} \rfloor$, $r=n-2^ta$. We have an underlying set of size $n=2^{t}a+r$ containing at least $\frac{n}{a}-\lfloor\log \frac{n}{a}\rfloor-1 $ defectives.
If $r=0$, then by Lemma \ref{felezgetes2} we are done. Otherwise let the first query $A$ contain $r$ elements. A positive answer allows us to find a defective element by binary search on $A$ using altogether at most
$\lceil \log n \rceil +1$ questions (actually, at most $\lceil \log n \rceil $ questions, because $r\le n/2$). If the answer is negative then the new underlying set contains $2^ta$ elements, of which more than $ \frac{n}{a}-\lfloor\log \frac{n}{a}\rfloor - \alpha r -1 = 2^t + r/a -\alpha r -\lfloor\log \frac{n}{a}\rfloor -1 \ge 2^t - \lfloor\log \frac{n}{a}\rfloor-1 $ are defective. Since $\lfloor \log \frac{n}{a} \rfloor =t$, the number of defectives is at least $2^t-t$, thus by Lemma \ref{felezgetes1} we need at most $t + \lceil \log a \rceil $ more queries to find a defective element, thus altogether we used at most $t + 1 + \lceil \log a \rceil \le \lceil\log n\rceil +1 $ queries, from which the theorem follows.
\qedbox \end{proof}

\smallskip

One might think that binary search is the best algorithm to find one defective if it can be used (i.e. for $k\geq n\alpha$). A counterexample for $k$ really big is easy to give: if $k=n$ then we do not need any queries and for $m=1, k=n-1$ we need just one query. It is somewhat more surprising that $g(n,\alpha n, \alpha, 1) \ge \lceil \log n \rceil $ is not necessarily true.

For example, the case $n=10, k=4, \alpha = 0.4, m=1$ can be solved using 3 queries: first we ask a set $A$ of size 4. If the answer is {\sc yes}, we can perform a binary search on $A$, if the answer is {\sc no}  then there are at least 3 defectives among the remaining 6 elements and now we ask a set $B$ of size 2. If the answer is {\sc yes}   then we perform a binary search on $B$, otherwise there are at least 3 defectives among the remaining 4 elements, so one query (of size 1) is sufficient to find a defective.
However, a somewhat weaker lower bound can be proved:

\begin{theorem}\label{keszegh} $g(n,k,\alpha,m) \ge \lceil \log(n-k+1) \rceil $.
\end{theorem}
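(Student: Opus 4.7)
It suffices to prove $g(n,k,\alpha,1)\ge\lceil\log(n-k+1)\rceil$, since an algorithm that outputs $m$ defectives in particular exhibits one defective, so $g(n,k,\alpha,m)\ge g(n,k,\alpha,1)$. The plan is an adversary argument of binary-search type.

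Fix $B\subseteq[n]$ with $|B|=k-1$ and put $L:=[n]\setminus B$, so $|L|=n-k+1$. For each $\ell\in L$ consider $D_\ell:=B\cup\{\ell\}$, a configuration with exactly $k$ defectives. The adversary maintains a live set $R\subseteq L$, initially $L$, with the invariant that $D_\ell$ is consistent with the answers given so far whenever $\ell\in R$. Given a query $Q$ set $u:=\alpha|Q|-|Q\cap B|$; since $|Q\cap D_\ell|=|Q\cap B|+[\ell\in Q]$, the configuration $D_\ell$ answers {\sc yes} iff $[\ell\in Q]\ge u$. If $u\le 0$ every $D_\ell$ answers {\sc yes}; if $u>1$ every $D_\ell$ answers {\sc no}; in both cases the adversary gives the unanimous answer and $R$ is unchanged. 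Otherwise $0<u\le 1$ and the configurations split by whether $\ell\in Q$, so the adversary picks the answer retaining the larger of $R\cap Q$ and $R\setminus Q$, replacing $R$ with that side. Each query at most halves $|R|$, so after any $q<\lceil\log(n-k+1)\rceil$ queries $|R|\ge 2$.

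When the algorithm then declares some $x$ to be defective, I exhibit a consistent configuration omitting $x$. If $x\notin B$, any $D_r$ with $r\in R\setminus\{x\}$ works. If $x\in B$, I take distinct $r^*,r'\in R$ and form $D':=(B\setminus\{x\})\cup\{r^*,r'\}$, of size $k$ and avoiding $x$. The identity
\[
|Q\cap D'|-|Q\cap D_{r^*}|=[r'\in Q]-[x\in Q],
\]
combined with the invariant that every $\ell\in R$ satisfies $[\ell\in Q]=[r^*\in Q]$ on split queries, shows $D'$ matches the adversary's answer on every split query; on non-split queries the gap $-u\ge 0$ (when $u\le 0$) or $u-1>0$ (when $u>1$) absorbs the $\pm 1$ difference. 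The main obstacle is the case $x\in B$: verifying consistency of $D'$ on borderline non-split queries requires choosing $r'\in R\setminus\{r^*\}$ to satisfy a few additional inclusion constraints (of the form $r'\in Q$ for certain {\sc yes}-queries with $u$ just above $-1$, and $r'\notin Q$ for certain {\sc no}-queries with $u$ just below $2$), which should be achievable using $|R|\ge 2$ together with a careful accounting of the adversary's updates — either by strengthening the invariant to commit to $r^*$ from the start, or by a direct combinatorial check that the constraints are simultaneously satisfiable. This case analysis is the technical core of the proof.
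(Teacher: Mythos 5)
Your reduction to $m=1$ and your Case 1 are fine, but Case 2 (the algorithm outputs some $x\in B$) is not a loose end to be tidied up --- it is a gap that this construction cannot close. The trouble is that your adversary commits, before the game starts, to $k-1$ elements that are defective in \emph{every} configuration it tracks. Consider the singleton query $Q=\{x\}$ with $x\in B$: there $u=\alpha-1<0$, so your adversary is forced to answer \textsc{yes}. But a \textsc{yes} answer to $\{x\}$ means, by the definition of the model, that $x$ is defective in every consistent configuration whatsoever, so no set $D'$ avoiding $x$ (whatever $r'$ you pick, and indeed whatever configuration of any form you pick) can match the transcript. Hence an algorithm that queries $\{x\}$ and outputs $x$ on a \textsc{yes} defeats your adversary in one question; the adversary has failed to exhibit an input on which that (globally incorrect) algorithm errs, which is precisely what a lower-bound adversary must do --- it should have answered \textsc{no} to push the algorithm onto a branch where it can be fooled, but the fixed $B$ forbids this. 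The same example shows your constraints on $r'$ are not always satisfiable: a \textsc{yes} query with $x\in Q$, $-1<u\le 0$ and $Q\cap R=\emptyset$ admits no $r'$. Any repair must abandon the pre-committed set $B$.

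The paper avoids the adversary machinery entirely: it proves the stronger Theorem \ref{domotor} (valid for arbitrary yes--no questions) by counting outputs. An algorithm using $q$ questions has at most $2^q$ answer sequences, hence at most $2^q$ possible output elements; if $n-2^q\ge k$ one may place all $k$ defectives among the elements that are never output, so correctness forces $n-2^q\le k-1$, i.e.\ $q\ge\lceil\log(n-k+1)\rceil$. That argument needs no case analysis and also makes the sharpness discussion immediate; I would switch to it.
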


We prove the stronger statement that even if one can use any kind of yes-no questions, still at least $\lceil \log(n-k+1) \rceil $ questions are needed. This is a slight generalization of the information theoretic lower bound.

\begin{theorem}\label{domotor} To find one of $k$ defective elements from
a set of size $n$, one needs $\lceil \log(n-k+1) \rceil $ yes-no
questions in the worst case and this is sharp.
\end{theorem}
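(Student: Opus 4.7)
The plan is to prove the lower bound and the matching upper bound as two separate arguments.

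For the lower bound, I will analyze the decision tree of an arbitrary correct adaptive yes-no algorithm of depth $t$, with each leaf labelled by the element it outputs. For every $k$-subset $D\subseteq[n]$, treated as the set of defectives, the algorithm run on $D$ reaches some leaf whose label lies in $D$; otherwise the algorithm would err on that input. Hence the set $X\subseteq[n]$ of distinct leaf labels meets every $k$-subset of $[n]$. A direct pigeonhole argument shows that any such hitting set has size at least $n-k+1$: if $|X|\le n-k$, then $[n]\setminus X$ has size at least $k$ and contains a $k$-subset disjoint from $X$. Combined with $|X|\le 2^t$, this yields $2^t\ge n-k+1$, i.e.\ $t\ge\lceil\log(n-k+1)\rceil$.

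For the upper bound, I fix an arbitrary $C\subseteq[n]$ of size $n-k+1$. Since at most $n-k$ elements are non-defective, $C$ must contain at least one defective. I then run binary search inside $C$ using queries of the form ``does this subset contain a defective element?'': at each step I partition the current $C$ into two halves and ask about one of them, recursing on a half guaranteed to still contain a defective (the queried half if the answer is YES, otherwise its complement inside the previous $C$). The invariant that the current set contains a defective is preserved, and after $\lceil\log(n-k+1)\rceil$ rounds the set shrinks to a single element, which is defective and is returned.

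The main obstacle is the lower bound, which departs from the classical information-theoretic bound because distinct leaves of the tree may share an output label; the transversal observation is what converts ``number of distinct outputs'' into the sharp quantity $n-k+1$. The upper bound is routine once one notices the initial pigeonhole.
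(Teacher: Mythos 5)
Your proposal is correct and follows essentially the same route as the paper: the lower bound counts the at most $2^t$ distinct output elements and observes that if fewer than $n-k+1$ elements can ever be output, the adversary can place all $k$ defectives among the never-output elements; the upper bound sets aside $k-1$ elements and binary-searches the remaining $n-k+1$. Your phrasing of the lower bound via a hitting set of all $k$-subsets is just a cleaner wording of the paper's contrapositive, not a different argument.
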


\begin{proof}  Suppose there is an algorithm that uses at most $q$
questions. The number of sequences of answers obtained is at most $2^q$, thus the
number of different elements selected by the algorithm as the output is also at most $2^q$.
This means that $n-2^q\le k-1$, otherwise it would be
possible that all $k$ defective elements are among those ones that were
not selected. Thus $q \ge \lceil \log(n-k+1) \rceil $ indeed.

Sharpness follows easily from the simple algorithm that puts $k-1$ elements
aside and runs a binary search on the rest.
\qedbox \end{proof}

Theorem \ref{keszegh} is an immediate consequence of Theorem \ref{domotor}, but this is not true for the sharpness of
the result. However, Theorem \ref{keszegh} is also sharp: if $\alpha \le \frac{2}{n-k+1}$, then we can run a binary search on any $n-k+1$ of the elements to find a defective.

We have seen in Theorem \ref{gyors} that if $n \leq k/\alpha$, then binary search works (with some additional constant number of questions if $m>1$). On the other hand, if
$n$ goes to infinity (with $k$ and $\alpha$ fixed), then the best algorithm is linear.

\begin{theorem}\label{approxi} For any $k$, $\alpha$, $m$

\[\frac{n}{a}+c_1 \le g(n,k,\alpha,m)\le \frac{n}{a}+c_2,\]

where $c_1$ and $c_2$ depend only on $k$, $\alpha$, and $m$.

\end{theorem}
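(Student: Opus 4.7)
The plan for the upper bound is partition-and-conquer. I partition $[n]$ into $\lceil n/a\rceil$ disjoint blocks $B_1,\dots,B_{\lceil n/a\rceil}$ of size at most $a$ and query each block. Because $\alpha|B_i|\le 1$, a YES on $B_i$ forces at least one defective inside and a NO rules out every defective. The union $Y$ of all YES-blocks therefore contains every defective element and satisfies $|Y|\le ka$. Inside $Y$ the defective density is at least $k/|Y|\ge 1/a\ge\alpha$, so Theorem~\ref{gyors} applies and finishes the job using at most $\lceil\log(ka)\rceil+c'$ further questions, with $c'$ depending only on $\alpha$ and $m$. The total is at most $n/a+c_2$ queries, with $c_2$ depending only on $k$, $\alpha$, and $m$.

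For the lower bound I set up an adversary. The adversary keeps a candidate set $U\subseteq[n]$ (initially $U=[n]$) together with the invariant that for every $m$-subset $V\subseteq[n]$ some $k$-subset of $U\setminus V$ is still consistent with all past answers; as long as this holds, no $m$ elements can have been identified as defective, so the algorithm cannot halt. The adversary answers every query NO. On a small query $Q$ (size at most $a$) the NO forces $D\cap Q=\emptyset$, so $U$ shrinks to $U\setminus Q$ and at most $a$ elements are lost. On a large query $Q$ (size $>a$; we may assume $|Q|\le k/\alpha$, larger queries being automatic NOs) the NO imposes only $|D\cap Q|<\alpha|Q|$, and $U$ can be left untouched: for any fixed $V$ of size $m$, one simply places all $k$ defectives inside $(U\setminus V)\setminus Q$, which has at least $|U|-m-|Q|\ge k$ elements as long as $|U|\ge T:=k+m+k/\alpha$. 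Hence during the entire ``bulk'' phase (while $|U|\ge T$) only small queries shrink $U$, and they do so by at most $a$ each, giving $|U|\ge n-qa$ after $q$ queries.

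The main obstacle will be the end-game, once $|U|$ drops below the threshold $T=O_{k,\alpha,m}(1)$. There large queries may also force the adversary either to answer YES or to delete several elements from $U$ at once, so the clean ``$\le a$ per query'' accounting breaks down. The remedy I would apply is to observe that in this regime both $|U|$ and the remaining work are $O_{k,\alpha,m}(1)$, hence at most a constant number of extra queries can be involved; combined with the $k+m$ elements that must remain in $U$ at the very end, this contributes only an additive constant. Putting it together yields $q\ge (n-T)/a-O_{k,\alpha,m}(1)=n/a+c_1$, matching the bound in the theorem.
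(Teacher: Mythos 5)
Your upper bound follows the paper's partition-into-$a$-blocks idea, but the step ``$|Y|\le ka$'' is not justified: the model only guarantees \emph{at least} $k$ defectives, so the number of YES-blocks, and hence $|Y|$, can be as large as $\Theta(n)$; Theorem \ref{gyors} then costs $\lceil\log|Y|\rceil+c'=\Theta(\log n)$ further questions, which is not an additive constant. The repair is what the paper does: keep only $\min(m,\#\mathrm{YES})$ of the YES-blocks (each contains a defective, and if there are fewer than $m$ of them they contain \emph{all} defectives, of which there are at least $k\ge m$), and probe those at most $ma$ elements one by one.

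The lower bound has the more serious gap, and it sits exactly where the real work lies. To preserve your invariant after a NO to a large query $Q$ you must exhibit, for each $m$-set $V$, a $k$-subset of $U\setminus V$ consistent with \emph{all} past answers; but earlier large queries were left inside $U$, so an arbitrary $k$-subset of $(U\setminus V)\setminus Q$ may contain at least $\alpha|Q''|$ elements of some earlier large query $Q''$ and thereby contradict the NO already recorded for $Q''$. (Concretely: $k=2$, $\alpha=0.1$, queries of size $11$; a NO means the query holds at most one defective, so any placement putting both defectives inside one earlier query is inconsistent.) The cardinality count $|U|-m-|Q|\ge k$ says nothing about this. The paper's proof confronts precisely this difficulty: it shows that as long as some $(k+1)$-set meets every asked set in at most one point, every single element can still be non-defective, and a counting argument shows that destroying all such $(k+1)$-sets requires $\Omega(n^2)$ large queries --- far more than $n/a$. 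Some argument of this kind (or the heap lemma used for Theorem \ref{maintheorem}) is indispensable; without it the adversary strategy does not survive large queries. Your ``end-game'' worry, by contrast, is harmless: for a lower bound one may simply stop the accounting once $|U|$ drops below the constant threshold.
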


\begin{proof} Upper bound: first we partition the underlying set into
$\lfloor \frac{n}{a}\rfloor$ $a$-element sets and possibly one additional set of
less than $a$ elements. We ask each of these sets (at most $\lfloor
\frac{n}{a}\rfloor+1$ questions). Then we choose $m$ sets for which we obtained a {\sc yes}   answer
 (or if there are less than $m$ such sets, then we choose all
of them). We ask every element one by one in these sets (at most $ma$
questions). One can easily see that we find at least $m$ defective
elements, using at most $\lfloor
\frac{n}{a}\rfloor+ma+1$ questions.

Lower bound: We use a simple adversary's strategy: suppose all the answers are {\sc no}  and there are $m$ elements identified as defectives.
Let us denote the family of sets that were asked by
$\mathcal{F}$. It is obvious that those sets of $\mathcal{F}$ that have size at most $a$ contain no defective elements.
Suppose there are $i$ such sets. We use induction on $i$. There are $n'
\ge n-ia$ elements not contained in these sets and we should
prove that at least $\frac{n}{a}+c_1-i\le \frac{n'}{a}+c_1$ other
questions are needed. Hence by the induction it is enough to prove
the case $i=0$.

Suppose $i=0$. If there is a set $A$ of size $k+1$, such that
$|A\cap F|\le 1$ for all $F\in \mathcal{F}$, then any $k$-element
subset of $|A|$ can be the set of the defective elements. In this case any element can be non-defective, a contradiction. Thus for every set $A$ of size $k+1$  there exists a set $F\in \mathcal{F}$, such that $|A \cap F| \ge 2$.

Let $b=\lfloor \frac{k}{\alpha}\rfloor$. We know that every set of $\mathcal{F}$ has size at
most $b$. Then a given
$F\in\mathcal{F}$ intersects at most $\sum_{j=2}^{k+1}{b \choose
j}{n-b \choose k+1-j}$ $(k+1)$-element sets in at least two points.
This number is $O(n^{k-1})$, and there are $\Omega(n^{k+1})$ sets
of size $k+1$, hence $|\mathcal{F}|=\Omega(n^{2})$ is needed.

It follows easily that there is an $n_0$, such that if $n>n_0$, then $|\mathcal{F}| \ge \frac{n}{a}$. Now let $c_1=-n_0/a$. If $n>n_0$ then
$|\mathcal{F}| \ge \frac{n}{a} \ge \frac{n}{a}+c_1$, while if $n \le n_0$ then
$|\mathcal{F}| \ge 0 \ge \frac{n}{a}+c_1$, thus the number of queries is at least  $\frac{n}{a}+c_1$, finishing the proof.
\qedbox \end{proof}

\noindent \emph{Remark.}  The theorem easily follows from Theorem \ref{maintheorem}, it is included here because of the much simpler proof.

\

It is easy to give a better upper bound for $m=1$.

\begin{theorem}\label{uj1} Suppose $k+\log k +1 \leq \lceil \frac{n}{a} \rceil$. Then
\[g(n,k,\alpha,1)\le \left\lceil \frac{n}{a} \right\rceil -k + \lceil \log a \rceil .\]
\end{theorem}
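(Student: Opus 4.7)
The plan is to peel off blocks of size $a$ one by one, inductively reducing to the boundary case $N = N^\star$, where $N^\star := k + \lceil \log k \rceil + 1$ is the smallest value of $N = \lceil n/a \rceil$ allowed by the hypothesis; at that point I finish using the algorithm behind the proof of Theorem~\ref{felezgetes}. Partition $[n]$ into $N$ blocks of size at most $a$, at most one of which is strictly smaller than $a$.

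For the inductive step ($N > N^\star$), ask the first block $B_1$ of size $a$. On YES, binary search within $B_1$ uses at most $\lceil \log a \rceil$ further queries, for a total of $1 + \lceil \log a \rceil \le N - k + \lceil \log a \rceil$ (since $N - k \ge \lceil \log k \rceil + 2 \ge 1$). On NO, the residual $n - a$ elements still contain at least $k$ defectives, and $\lceil (n-a)/a \rceil = N - 1 \ge N^\star$, so the inductive hypothesis bounds the remaining cost by $(N - 1) - k + \lceil \log a \rceil$, giving $N - k + \lceil \log a \rceil$ in total.

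In the base case $N = N^\star$ I must find a defective using $\lceil \log k \rceil + 1 + \lceil \log a \rceil$ queries. Write $n = 2^t a + r$ with $t := \lfloor \log(n/a) \rfloor$ and $0 \le r < 2^t a$, and follow the two-step procedure from the proof of Theorem~\ref{felezgetes}: if $r > 0$ ask a set $A$ of size $r$ first. On YES, binary search inside $A$ costs $1 + \lceil \log r \rceil \le \lceil \log n \rceil \le \lceil \log k \rceil + 1 + \lceil \log a \rceil$ queries (using $r \le n/2$ together with $n \le N^\star a$ and $N^\star \le 2k$). On NO, or directly when $r = 0$, the set $\bar A$ has size $2^t a$; because $r$ is a multiple of $a$ and $\alpha \le 1/a$, the quantity $\alpha r$ is an integer, and a NO answer forces strictly fewer than $\alpha r$ defectives in $A$, hence at least $k - \alpha r + 1$ defectives in $\bar A$. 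A short case analysis, according to whether $N^\star$ lies below, at, or above $2^{\lceil \log k \rceil}$, verifies that this count meets the hypothesis of Lemma~\ref{felezgetes1} (cost $t + \lceil \log a \rceil$) in one sub-case and of Lemma~\ref{felezgetes2} (cost $t + \lceil \log a \rceil + 1$) in the others, with the resulting total always equal to $\lceil \log k \rceil + 1 + \lceil \log a \rceil$.

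The principal obstacle is the NO branch of the base case: the choice of lemma is delicate because it depends on the precise position of $N^\star$ relative to nearby powers of two, and the accounting closes only thanks to the integer-slack in $\alpha r$ (one extra defective in $\bar A$) together with the fact that $N^\star - k = \lceil \log k \rceil + 1$ leaves just the one unit of margin needed to absorb the $+1$ cost of Lemma~\ref{felezgetes2}.
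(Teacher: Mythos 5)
Your proposal is correct in substance but follows a genuinely different route from the paper's. The paper's algorithm opens with a single question of size $ka$: a {\sc no} answer immediately pins at least one defective outside that set, after which one sweeps the complement with disjoint blocks of size $a$ and finishes by binary search on the last $2a$ elements; the accounting is a two-line computation and no delicate base case ever arises. You instead peel size-$a$ blocks until $\lceil n/a\rceil$ drops to the threshold $N^\star=k+\lceil\log k\rceil+1$ and then re-enter the machinery of Theorem~\ref{felezgetes} via Lemmas~\ref{felezgetes1} and~\ref{felezgetes2}. Your base case does close: with $t=\lfloor\log(n/a)\rfloor$ one always has $\lceil\log k\rceil-1\le t\le\lceil\log k\rceil$ (apart from the exceptional $r=0$ situations with $k\le 2$, which are immediate), a {\sc no} on the size-$r$ set leaves at least $2^t-\lceil\log k\rceil$ defectives among the remaining $2^ta$ elements, and this meets the hypothesis of Lemma~\ref{felezgetes1} exactly when $t=\lceil\log k\rceil$ and of Lemma~\ref{felezgetes2} exactly when $t=\lceil\log k\rceil-1$, in both cases hitting the budget $\lceil\log k\rceil+1+\lceil\log a\rceil$ with no slack. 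What the paper's route buys is the complete avoidance of this case analysis; what yours buys is an algorithm whose queries after the reduction are the ones already analyzed for Theorem~\ref{felezgetes}.

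One flaw in your write-up needs repair: $r$ need not be a multiple of $a$ (the original $n$ need not be), and even if it were, $\alpha r$ need not be an integer unless $1/\alpha$ is; so the deduction ``at least $k-\alpha r+1$ defectives in $\overline{A}$'' is not justified as stated. The correct and sufficient argument is the one in the paper's own proof of Theorem~\ref{felezgetes}: the number of defectives in $A$ is an \emph{integer} strictly below $\alpha r\le r/a$, hence at most $\lceil r/a\rceil-1\le N^\star-2^t-1$, which gives the required $2^t-\lceil\log k\rceil$ defectives in $\overline{A}$. With that one-line fix, and with the ``short case analysis'' actually written out, your proof is complete, just considerably longer than the paper's.
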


\begin{proof}
First we ask a set $X$ of size $ka$. If the answer is
{\sc yes}, then we can find a defective element in $\lceil \log ka \rceil$ steps
by Theorem \ref{gyors}. In this case the number of questions used is at most $1+\lceil \log ka \rceil = 1 + \lceil \log k + \log a \rceil \leq 1 + \lceil \log k \rceil + \lceil \log a \rceil \leq \lceil \frac{n}{a} \rceil -k + \lceil \log a \rceil$, where the last inequality follows from the condition of the theorem.

If the answer is {\sc no}, then we know that there are
at most $k-1$ defectives in $X$, so we have at least one defective in $\overline{X}$. Continue the algorithm by asking disjoint subsets of $X$ of size $a$, until the answer
is {\sc yes} or we have at most $2a$ elements not yet asked. In these cases
using at most $\lceil\log 2a\rceil $ questions we can easily find a
defective element, thus the total number of questions used is at most $1+  \lceil \frac{n-ka-2a}{a} \rceil +\lceil \log 2a \rceil = 1 +  \lceil \frac{n}{a} \rceil -k -2 +\lceil \log a \rceil +1 = \lceil \frac{n}{a} \rceil -k +\lceil \log a \rceil$, finishing the proof.
\qedbox \end{proof}

Note that if the condition of Theorem \ref{uj1} does not hold
(that is, $k+\log k +1 > \lceil \frac{n}{a} \rceil$), then $k\ge
\frac{n}{a}- \lfloor\log \frac{n}{a}\rfloor-1$, hence $\lceil \log
n \rceil +1$ questions are enough by Theorem \ref{felezgetes}.

The exact values of $g(n,k,\alpha,m)$ is hard to find, even for $m=1$. The algorithm used in the proof of Theorem \ref{uj1} seems to be optimal for $m=1$ if $k+\log k +1 \leq \lceil \frac{n}{a} \rceil $. However, counterexamples with $ 1/\alpha$ not an integer are easy to find (consider i.e. $n=24$, $k=2$, $\alpha = \frac{2}{11}$).

\begin{conj} \label{sej1} If $\frac{1}{\alpha}$ is an integer and $k+\log k +1 \leq \lceil \frac{n}{a} \rceil $,
then the algorithm used in the proof of Theorem \ref{uj1} is optimal for $m=1$.
\end{conj}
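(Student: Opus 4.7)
The plan is to prove the conjectured lower bound $g(n,k,\alpha,1) \geq \lceil n/a \rceil - k + \lceil \log a \rceil$ by an adversary argument that exploits the integrality of $a = 1/\alpha$. The adversary maintains a candidate set $C \subseteq [n]$ with the invariant that every $k$-element subset of $C$ is a defective configuration still consistent with all answers given so far; as a consequence, the intersection of consistent configurations is empty whenever $|C| \geq k+1$, so the algorithm cannot declare a defective until $|C|$ is squeezed down to $k$. The adversary's default response is \textsc{no}, updating $C$ online by keeping at most $\lceil \alpha|Q| \rceil - 1$ elements of $C \cap Q$ and choosing the kept elements to minimize overlap with future queries. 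Integrality is essential: when $|Q|$ is a multiple of $a$, this threshold equals the sharp integer $|Q|/a - 1$; when $1/\alpha$ is not an integer, the inequality $|D \cap Q| < \alpha|Q|$ grants an extra unit of slack for certain $|Q|$, and this is exactly the slack the counterexample $n = 24,\ k = 2,\ \alpha = 2/11$ noted after Theorem~\ref{uj1} exploits to beat the algorithm of that theorem.

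The argument splits into a bulk phase and a search phase. In the bulk phase, I would show that at least $\lceil n/a \rceil - k$ queries are required to shrink $|C|$ from $n$ to $k+1$, regardless of how the algorithm mixes big, medium, and small queries. The delicate point is that a single size-$ja$ query can strip up to $j(a-1)+1$ elements from $C$, so tracking only $|C|$ is too crude; instead, one needs a potential function that takes into account the structure of $C$ relative to all past queries, designed so that $\Phi$ starts at $\lceil n/a \rceil - k$ and decreases by at most $1$ per query. In the search phase, once $|C| = k+1$, the adversary must sometimes answer \textsc{yes}, but it should do so while confining the residual ambiguity to a pocket of size at most $2a$; identifying a defective in that pocket with queries of size at most $a$ then requires $\lceil \log a \rceil$ further queries by a bisection-style lower bound, matching the second term of the conjectured bound.

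The main obstacle is the bulk-phase lemma, especially handling the algorithm's use of large queries and overlapping structures. Concretely, in the case $n = 20$, $k = 2$, $\alpha = 1/3$, several natural algorithms (for instance, one big query followed by four disjoint size-$a$ queries and a final size-$1$ query) reach a state where the adversary's best move is to answer \textsc{yes} at an earlier stage, forcing a binary-search subproblem whose cost is exactly $\lceil \log a \rceil$ and whose branch reproduces the full $+\lceil \log a \rceil$ term in the conjectured bound. Generalizing this to show the adversary can always engineer such a forced \textsc{yes}-branch regardless of the algorithm's query sequence is where the bookkeeping is hardest; this is precisely the point where the integer hypothesis on $1/\alpha$ is needed to rule out the slack-exploiting speedups of the non-integer case. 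If the potential-function route encounters irreducible loss, a fallback is to adapt the $(k+1)$-set double-cover argument from the proof of Theorem~\ref{approxi}, supplemented by a bisection lower bound for the size-restricted identification of a defective inside a pocket of size $\leq 2a$, using the hypothesis $k + \log k + 1 \leq \lceil n/a \rceil$ to guarantee that the bulk phase dominates and the two phases can be cleanly separated.
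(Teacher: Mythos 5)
The statement you are trying to prove is Conjecture~\ref{sej1}, which the paper leaves \emph{open}: no proof is given, and the authors only remark that it holds for $k=1$ and that for general $k$ it would follow from the (also open) Conjecture~\ref{integer} on monotonicity in $k$. So there is no proof in the paper to compare against, and the real question is whether your proposal closes the gap. It does not: it is a plan, not a proof, and you say so yourself. The entire weight of the argument rests on the ``bulk-phase lemma'' --- a potential function $\Phi$ that starts at $\lceil n/a\rceil - k$ and drops by at most $1$ per query --- and you never define $\Phi$. Your own accounting shows why this is the crux and not a formality: under your invariant (every $k$-subset of the candidate set $C$ remains a consistent configuration), a single query of size $ka$ answered {\sc no} forces the adversary to discard up to $k(a-1)+1$ elements of $C$, so $|C|$ alone certifies only about $(n-k)/(k(a-1)+1)$ queries, far below $n/a - k$. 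Whatever structure the potential must track to recover the factor $k$ is exactly the content of the conjecture, and the proposal does not supply it. The fallback (adapting the $(k+1)$-set covering count from Theorem~\ref{approxi}) is likewise only gestured at; note that that argument in the paper yields $n/a + c_1$ with an unspecified additive constant, whereas the conjecture demands the exact constant $-k+\lceil\log a\rceil$, so substantial sharpening would be needed.

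Two further soft spots. First, your invariant is stronger than necessary --- the adversary only needs the intersection of all consistent configurations to be empty, not that every $k$-subset of a single set $C$ be consistent --- and it is not clear that this stronger invariant can be maintained all the way down to the tight bound; the paper's own lower-bound machinery (Lemma~\ref{domlemma} plus Proposition~\ref{trivi2}) deliberately works with transversals of $k$ disjoint heaps rather than with one candidate pool, precisely to control how overlapping large queries interact. Second, the hand-off to the search phase is unargued: you assert that the adversary can always confine the ambiguity to a ``pocket'' of size at most $2a$ and then charge $\lceil\log a\rceil$ further queries, but the additive constants of the two phases must combine to exactly $-k+\lceil\log a\rceil$, and nothing in the proposal shows the algorithm cannot amortize search-phase work into the bulk phase (this is exactly the phenomenon exploited in the $k=2$ analysis of algorithm W, where the optimal count is achieved only after a delicate case analysis via Lemma~\ref{nullas} and Claim~\ref{harmas}). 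Your diagnosis of where integrality of $1/\alpha$ enters is correct and consistent with the paper's counterexample $g(24,2,2/11,1)=5$, but a correct identification of the obstacles is not a proof that they can be overcome. As it stands, the conjecture remains open.
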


It is easy to see that Conjecture \ref{sej1} is true for $k=1$. For other values of $k$ it would follow from the next, more general conjecture.

\begin{conj}\label{integer} If $\frac{1}{\alpha}$ is an integer, then
$g(n,k,\alpha,1) \le g(n,k+1,\alpha,1)+1$.
\end{conj}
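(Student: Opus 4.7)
The plan is an algorithmic reduction: given an optimal adaptive algorithm $A$ for $g(n, k+1, \alpha, 1) = q$, I construct an algorithm $B$ for $(n, k, \alpha, 1)$ using at most $q+1$ queries. The extra query is spent up front: $B$ picks an arbitrary $x^* \in [n]$ and first asks the singleton $\{x^*\}$; since $\alpha < 1$, this query is answered {\sc yes} iff $x^*$ is defective. A {\sc yes} ends the algorithm immediately, and on {\sc no}, $B$ has learned that $x^*$ is non-defective and has $q$ queries left to locate a defective.

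In the {\sc no} case, $B$ simulates $A$ on $[n]$ while pretending the defective set is $D \cup \{x^*\}$, where $D$ is the true (unknown) $k$-element defective set; the pretend world has the $k+1$ defectives that $A$ expects. For a query $Q$ of $A$ with $x^* \notin Q$, the pretend answer equals the real one, and $B$ just asks $Q$. For $Q$ with $x^* \in Q$, the pretend {\sc yes} condition is $|D \cap (Q \setminus \{x^*\})| \ge \lceil \alpha |Q|\rceil - 1$. Writing $|Q| = sa + r$ with $0 \le r < a$ and using that $a = 1/\alpha$ is an integer, a direct check gives $\lceil \alpha(|Q|-1)\rceil = \lceil \alpha |Q|\rceil - 1$ iff $r = 1$, so in that case the real query $Q \setminus \{x^*\}$ faithfully simulates the pretend answer.

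The main obstacle is $r \ne 1$, where the naive substitution $Q \mapsto Q \setminus \{x^*\}$ is off by one in the threshold (the real threshold is too high, so we would need to shrink the queried set, but shrinking risks losing defectives). I see two possible remedies. The first is a normal-form argument: when $1/\alpha$ is an integer, any query of size $sa + r$ with $1 \le r \le a$ has threshold $s+1$, so a query of size $sa + r$ with $r \ge 2$ arguably wastes information compared to one of size $sa + 1$; formalizing this should let us assume without loss of generality that every query of $A$ containing $x^*$ has size with remainder $1$ modulo $a$. The second is a padding strategy for $r \ge 2$, replacing $x^*$ in $Q$ by a block of already-identified non-defective elements so as to drop $|Q|$ down to residue $1$; the especially hard sub-case is $r = 0$, which padding cannot fix and which therefore must be excluded by some prior step. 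Establishing either remedy cleanly, without blowing the $+1$ query budget, is the principal difficulty I foresee.

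A subsidiary issue is that $A$'s eventual proposed defective could be $x^*$ itself. This is handled by noting that the real answer sequence is consistent with multiple $(k+1)$-configurations (one for each choice of non-defective $y \in [n] \setminus D$ playing the phantom role), so one may assume $A$ outputs an element lying in every consistent configuration, which is in $D$ and hence a true defective distinct from $x^*$.
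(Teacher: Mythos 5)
This statement is Conjecture~2 of the paper and is left \emph{open} there: the authors give no proof of it (they only observe that it would imply Conjecture~1 and that it fails when $1/\alpha$ is not an integer). So there is no proof of the paper's to compare against, and to settle the statement you would need a complete, self-contained argument. Your proposal is not one. You yourself identify the case of queries $Q\ni x^*$ with $|Q|\not\equiv 1\pmod{a}$ as the principal difficulty, and neither proposed remedy closes it. The padding idea goes the wrong way: writing $|Q|=sa+r$ with $r\ge 2$, the pretend threshold is $s$, while any real query $Q'\supseteq Q\setminus\{x^*\}$ has $|Q'|\ge sa+r-1>sa$ and hence threshold $\lceil |Q'|/a\rceil\ge s+1$; adjoining known non-defectives only increases $|Q'|$ and therefore the threshold, and to lower it you would have to delete $r-1$ further elements of $Q$ whose status you do not know. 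The normal-form claim is also unjustified: a query of size $sa+r$ with $r\ge 2$ and one of size $sa+1$ share the same threshold but give genuinely different information (there is no canonical choice of which $r-1$ elements to drop, and dropping a defective can flip a {\sc yes} to a {\sc no}), and the efficient algorithms in this very paper (Theorem~7 and algorithm~W) use queries of size exactly $a$ or $ka$, i.e.\ residue $0$ modulo $a$ --- precisely the sub-case you concede padding cannot handle. So an optimal algorithm $A$ for $k+1$ defectives cannot be assumed to avoid these query sizes.

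There is a second gap at the final step. The answer sequence your simulation feeds to $A$ is guaranteed to be consistent only with the single $(k+1)$-configuration $D\cup\{x^*\}$; configurations $D\cup\{y\}$ for other non-defective $y$ need not be consistent with it, because the answers to queries containing $x^*$ but not $y$ were computed with $x^*$ in the phantom role. A correct algorithm $A$ is only obliged to output an element lying in every configuration consistent with the answers it received, so it is entitled to output $x^*$ itself, at which point $B$ has exhausted its budget and has identified no defective. Both gaps would need to be repaired before this reduction could be considered a proof of the conjecture.
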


Obviously, Conjecture \ref{integer} also fails if $1/\alpha$ is not an
integer. One can see for example that $g(24,1,2/11,1)=7$ and $g(24,2,2/11,1)=5$.

\section{The main theorem}

In this section we prove a lower and an upper bound differing only by a constant depending only on $k$.
For the lower bound we need the following simple generalization of the information
theoretic lower bound.

\begin{proposition}\label{trivi2} Suppose we are given $p$ sets $A_1, \ldots , A_p$ of size at least $n$,
each one containing at least one defective and an additional set $A_0$ of arbitrary size containing no
defectives. Let $m \le p$. Then the number of
questions needed to find at least $m$ defectives is at
least $\lceil m \log n \rceil$.
\end{proposition}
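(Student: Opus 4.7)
The plan is an adversary argument that refines the standard information-theoretic bound from Theorem~\ref{domotor}. I will assume the $A_i$ are pairwise disjoint, as is the case in the intended use of this proposition. For each $i\in\{1,\ldots,p\}$ fix a subset $B_i\subseteq A_i$ of size exactly $n$; the $B_i$ are then disjoint as well. Let $\mathcal{D}$ be the family of $n^p$ defect configurations in which the set of defectives is exactly $\{d_1,\ldots,d_p\}$ with $d_i\in B_i$. Every configuration in $\mathcal{D}$ is consistent with the hypotheses of the proposition.

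The adversary answers each yes--no question with the bit that preserves the larger of the two sub-families of $\mathcal{D}$ still compatible with all answers so far. Consequently, after $q$ questions at least $n^p/2^q$ configurations of $\mathcal{D}$ remain compatible with the announced answer sequence.

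Suppose the algorithm terminates by outputting $m$ distinct elements $o_1,\ldots,o_m$. Each $o_j$ must be defective in every remaining compatible configuration. In a configuration from $\mathcal{D}$ the only defectives are the coordinates $d_i$, and since the $B_i$ are disjoint each $o_j$ lies in a unique $B_{i_j}$ and must satisfy $d_{i_j}=o_j$ in every remaining configuration. Distinctness of the $o_j$ forces the $i_j$ to be distinct, so $m$ of the $p$ coordinates are pinned to single values while the remaining coordinates range over at most $n$ values each. Hence at most $n^{p-m}$ configurations of $\mathcal{D}$ remain, and combining with the adversary lower bound yields $n^p/2^q\le n^{p-m}$, so $2^q\ge n^m$ and thus $q\ge\lceil m\log n\rceil$ since $q$ is an integer.

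The delicate point I expect is the last identification: I need each output element to correspond to a single pinned coordinate. That step uses disjointness of the $B_i$ crucially; for non-disjoint $A_i$ (so that an output element could a priori satisfy several sets at once) the same bound can be salvaged by induction on $m$, peeling off one confirmed defective at a time and invoking the $m=1$ case of Theorem~\ref{domotor} on the surviving system, but for the intended application the disjoint case suffices.
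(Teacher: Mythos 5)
Your proof is correct and follows essentially the same route as the paper's: restrict to configurations with exactly one defective per set and apply the information-theoretic count that $2^q$ must exceed the ratio of initial to final configurations, yielding $2^q\ge n^m$. You merely make explicit (the adversary halving step, the disjointness assumption, the pinning of $m$ coordinates) what the paper's terser proof leaves implicit.
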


\begin{proof}

Suppose that we are given the additional information that every set $A_i$ ($i \ge 1$)
contains exactly one defective element. Now we use the
information theoretic lower bound: there are $\prod_{i=1}^p
|A_i|$ possibilities for the distribution of the defective elements at the beginning,
and at most $\prod_{i=1}^{p-m}|A_{j_i}|$ at the end (suppose we have found defective elements in every set
$A_i$ except in  $A_{j_1}, \dots, A_{j_{p-m}}$), thus if we used $l$
queries, then $2^l \ge
n^m$, from which the proposition follows.
\qedbox \end{proof}

Now we formulate the main theorem of the paper.

\begin{theorem} \label{maintheorem} For any $k$, $\alpha$, $m$
\[
\frac{n}{a} +m\log a-c_1(k)\le g(n,k,\alpha,m)\le \frac{n}{a} +m\log a+c_2(k),
\]
where $c_1(k)$ and $c_2(k)$ depend only on $k$.
\end{theorem}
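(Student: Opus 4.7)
For the upper bound I would partition $[n]$ into $\lceil n/a\rceil$ groups of size at most $a$ and query every group, using at most $n/a+1$ queries. Because $\alpha a\le 1$, a test on any set of size at most $a$ returns \textsc{yes} exactly when the set contains at least one defective, so binary search inside a yes group locates a defective in $\lceil\log a\rceil$ further queries (every subquery has size at most $a$). Let $Y$ denote the set of yes groups. If $|Y|\ge m$, then binary-searching $m$ of them yields $m$ defectives and the total cost is at most $n/a+m\log a+O(k)$. Otherwise all $k$ defectives lie in the union $S$ of the $|Y|<m$ yes groups, with $|S|\le(m-1)a\le(k-1)a$; after binary-searching every yes group (finding $|Y|$ defectives) what remains is an instance of the same problem with \emph{strictly smaller} defective count: find $m-|Y|$ defectives in a set of size at most $(k-1)a$ containing at least $k-|Y|<k$ defectives. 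Strong induction on $k$ then closes the bound; the base case $k=1$ is Theorem~\ref{uj1}, and the resulting recurrence $c_2(k)\le c_2(k-1)+O(k)$ yields $c_2(k)=O(k^2)$, which is a function of $k$ alone.

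For the lower bound I would combine an ``all-\textsc{no}'' adversary in the style of the lower bound of Theorem~\ref{approxi} with Proposition~\ref{trivi2}. The adversary maintains a family of admissible placements in which $m$ of the defectives sit one apiece in $m$ still-unspecified size-$a$ ``candidate blocks'' and the remaining $k-m$ filler defectives are placed so that every earlier \textsc{no} answer remains consistent. A size-$\le a$ query answered \textsc{no} removes its at-most-$a$ elements from the pool of possible defective positions, so confining the admissible configurations to $m$ specific blocks requires on the order of $n/a$ such queries; larger \textsc{no}-queries give only weak constraints and are handled by the pairwise-intersection counting argument already used in Theorem~\ref{approxi}. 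Once the blocks are pinned down, Proposition~\ref{trivi2} with $p=m$ and block size $a$ supplies the additional $\lceil m\log a\rceil$ queries needed to single out one defective from each block.

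The technical heart, and the main obstacle, is fusing these two ingredients into a single coherent online adversary: each query must be chargeable \emph{either} against the order-$n/a$ ``search'' budget (by eliminating candidate-block positions) \emph{or} against the per-block $\log a$ ``refinement'' budget of exactly one of the $m$ blocks (by doubling the admissible placements inside that block), essentially never to both at once. Queries that straddle several candidate blocks, queries of size greater than $a$, and the slack created by the $k-m$ filler defectives all complicate this accounting, but I expect their net effect to be only an additive $O(k)$ term, which can be absorbed into the constant $c_1(k)$.
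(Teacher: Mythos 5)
Your upper bound is essentially the paper's own argument: query disjoint blocks of size $a$, binary-search the \textsc{yes} blocks, and when fewer than $m$ of them appear, observe that all defectives are confined to the \textsc{yes} blocks and recurse there. Your organization as a strong induction on $k$ with the recurrence $c_2(k)\le c_2(k-1)+O(k)$ is sound (the paper's direct bookkeeping gives the slightly better $\lceil n/a\rceil+m\lceil\log a\rceil+k$, but both constants depend only on $k$), and the observation that $\alpha a\le 1$ turns size-$\le a$ tests into ordinary group tests is exactly the right one. This half is fine.

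The lower bound, however, is a plan rather than a proof, and the step you defer is precisely the whole difficulty. After the all-\textsc{no} adversary has forced roughly $n/a$ queries and shrunk the pool of candidate defective positions to a set $S'$ of size $\Theta_k(a)$, you must exhibit $k$ pairwise disjoint blocks of size at least $a$ inside $S'$ such that \emph{every} transversal of the blocks is consistent with \emph{all} previous answers --- in particular with the large queries (size between $a+1$ and $\lfloor k/\alpha\rfloor$), each of which can meet $S'$ in up to $2ka$ elements and can straddle any tentative blocks. Only then does Proposition~\ref{trivi2} yield the additional $m\log a$ queries. The pairwise-intersection count from Theorem~\ref{approxi} cannot be substituted here: it shows that a small family leaves some $(k+1)$-set doubly unintersected \emph{somewhere} in $[n]$, but gives no control over where, and it cannot be rerun on the small surviving set $S'$, which may be touched by as many as $\Theta_k(1)$ large queries per element. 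The paper closes exactly this gap with a separate combinatorial statement (Lemma~\ref{domlemma}): on an underlying set of size $k\beta(2^{kl}-1)a$, any system of sets of size at most $\beta a$ in which each point lies in at most $l$ sets admits $k$ disjoint heaps of size $\beta a$, each set of the system meeting at most one heap. Its proof is an iterated half-covering argument, not an additive-$O(k)$ accounting; your sentence ``I expect their net effect to be only an additive $O(k)$ term'' is the place where this lemma would have to be proved.
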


\begin{proof}
First we give an algorithm that uses at most $\frac{n}{a} +m\log a+c_2(k)$ queries, proving the upper bound.
In the first part of the procedure we ask disjoint sets $A_1, A_2, \ldots , A_r$ of size $a$ until either there were $m$ {\sc yes}   answers or there are no more elements left. In this way we ask at most $\lceil \frac{n}{a} \rceil $ questions.

Suppose we obtained {\sc yes}   answers for the sets $A_1, A_2, \ldots A_{m_1}$ and {\sc no}  answers for the sets $A_{m_1+1}, \ldots , A_{r}$. If $m_1 \ge m$, then in the second part of the procedure we use binary search in the sets $A_1, A_2, \ldots , A_m$ in order to find one defective element in each of them. For this we need $m \lceil \log a \rceil $ more questions.

If $m_1<m$, then first we use binary search in the sets $A_1, A_2, \ldots , A_{m_1}$ in order to find defective elements $a_1 \in A_1, a_2\in A_2, \ldots , a_{m_1} \in A_{m_1} $. Then we iterate the whole process using $S_1 = \cup_{i=1}^{m_1} A_i\setminus \{ a_i \}$ as an underlying set, that is we ask disjoint sets $B_1, B_2, \ldots , B_t$
of size $a$ until either we obtain $m-m_1$ {\sc yes}   answers or there are no more elements left. Suppose we obtained {\sc yes}   answers for the sets $B_1, B_2, \ldots B_{m_2}$ and {\sc no}  answers for the sets $A_{m_2+1}, \ldots , A_{t}$. If $m_2 \ge m-m_1$, then in the second part of the procedure we use binary search in the sets $B_1, B_2, \ldots , B_{m-m_1}$ in order to find one defective element in each of them, while if $m_2<m-m_1$, then first we use binary search in the sets $B_1, B_2, \ldots , B_{m_2}$ in order to find defective elements $b_1 \in B_1, b_2\in B_2, \ldots , b_{m_2} \in A_{m_2} $ and continue the process using $S_2 = \cup_{i=1}^{m_2} B_i\setminus \{ b_i \}$ as an underlying set, and so on, until we find $m=m_1+m_2+\ldots+m_j$ defective elements. Note that $m_i\ge 1, \; \forall i \le j$, since $k\ge m$.
We have two types of queries: queries of size $a$ and queries of size less than $a$ (used in the binary searches).
The number of questions of size $a$ is at most $\lceil \frac{n}{a} \rceil $ in the first part and at most
$m_1+m_2+\ldots+m_{j-1}< m\le k$ in the second part. The total number of queries of size less than $a$ is at most
$m \lceil \log a \rceil $, thus the total number of queries is at most $\lceil \frac{n}{a} \rceil + m \lceil \log a \rceil + k$, proving the upper bound.

To prove the lower bound we need the following purely set-theoretic lemma.

\begin{lemma} \label{domlemma} Let $k,l,a$ be arbitrary positive integers and $\beta > 1$.
Let now $\cal H$ be a set system on an underlying set $S$ of size $c(k,l,\beta)\cdot a=k\beta (2^{kl}-1)a$, such that every set of $\cal H$ has size at most $\beta a$ and every element of $S$ is contained in at most $l$ sets of $\cal H$. Then we can select $k$ disjoint subsets of $S$ (called heaps) $K_1,K_2 , \ldots ,K_k$ of size $\beta a$, such that every set of $\cal{H}$ intersects at most one heap.
\end{lemma}

\begin{proof}
Let us partition the underlying set into $k$ heaps of size $\beta
a(2^{kl}-1)$ in an arbitrary way. Now we execute the following
procedure at most $kl-1$ times, eventually obtaining $k$ heaps
satisfying the required conditions. In each iteration we make sure
that the members of a subfamily $\cal H'$ of $\cal H$ will intersect
at most one heap at the end.

In each iteration we do the following. We build the subfamily ${\cal H'}
\subseteq {\cal H}$ by starting from the empty subfamily and adding an
arbitrary set of $\cal H$ to our subfamily until there exists a heap
$K_i$ such that $|K_i \cap \cup_{H\in {\cal H'}} H | \ge |K_i|/2$, that
is $K_i$ is at least half covered by ${\cal H'}$. We call $K_i$ the
selected heap. If the half of several heaps gets
covered in the same step, then we select one where the difference of the number of covered elements and the half of the size of the heap is maximum.

Now we keep the covered part of the selected heap and keep the
uncovered part of the other heaps  and throw away the other elements. We also throw away the
sets of the subfamily ${\cal H'}$ from our family ${\cal H}$, as we
already made sure that the members of ${\cal H'}$  will not intersect
more than one heap at the end. In this way we obtain smaller heaps but
we only have to deal with the family ${\cal H} \setminus {\cal H'}$.

We prove by induction that after $s$ iterations all heaps have size at
least $\beta a(2^{kl-s}-1)$. This trivially holds for $s=0$. By
the induction hypothesis, the heaps had size at least
$\beta a(2^{kl-s+1}-1)$ before the $s$th iteration step. After the $s$th step the new size of
the selected heap $K$ is at least $|K|/2\ge \beta
a(2^{kl-s+1}-1)/2\ge \beta a(2^{kl-s}-1)$. Now we turn our attention to the unselected heaps.
Suppose the set we added last to $\cal H'$ is the set $I$. Clearly,
$|K_j \cap \cup_{H\in {\cal H'}\setminus\{I\}} H | \le |K_j|/2$ for
all $j$. Let $K$ be the selected heap and $K_i$ be an arbitrary unselected heap.  Now by the choice of $K$ we have $|K_i \cap \cup_{H\in {\cal H'}} H | \le |K_i|/2 + |I|/2$, otherwise $|K_i \cap \cup_{H\in {\cal H'}} H | + |K\cap \cup_{H\in {\cal H'}} H | > |K_i|/2 + |K|/2 + |I|$, which is impossible, since $|K_i \cap \cup_{H\in {\cal H'}} H | + |K\cap \cup_{H\in {\cal H'}} H | = | ((K_i\cup K) \cap \cup_{H\in {\cal H'}\setminus\{I\}} H ) \cup ((K_i\cup K) \cap I )| \leq  |K_i|/2 + |K|/2 + |I|$.

Now since $|I| \leq \beta a$, the new size of the unselected heap $K_i$ is
$|K_i'|=|K_i \setminus \cup_{H\in {\cal H'}} H | \ge |K_i|/2-\beta
a/2 \ge \beta a(2^{kl-s+1}-1)/2-\beta a/2\ge \beta a(2^{kl-s}-1)$, finishing the proof by induction.

Now in each iteration we delete a family that covers the
selected heap, thus any heap can be selected at most $l$ times, since
every element is contained in at most $l$ sets.
After $kl-1$ iterations the size of an arbitrary heap will be still at
least $\beta a$. Furthermore, all but one heaps were selected exactly
$l$ times, thus any remaining set of $\cal H$ can only intersect the last heap. That is, heaps
at this point satisfy the required condition for all sets of $\cal H$.

If we can iterate the process at most $kl-2$ times, then after the last possible
iteration more than half of any heap is not covered by the union of the
remaining sets. Deleting the covered elements from each heap we
obtain heaps of size at least $\beta a$ that satisfy the condition.
\qedbox \end{proof}

Now we are in a position to prove the lower bound of Theorem \ref{maintheorem}. We use the adversary method, i.e. we give a strategy to the adversary that forces the questioner to ask at least $\frac{n}{a} +m\log a-c_1(k)$ questions to find $m$ defective elements.

Recall that all questions have size at most $\lfloor k/\alpha \rfloor$ and now the adversary gives the additional information that there are exactly $k$ defective elements.

During the procedure, the adversary maintains weights on the elements. At the beginning all elements have weight $0$. Let us denote the set of the possible defective elements by $S'$. At the beginning $S'=S$.
At each question $A$ the strategy determines the answer and also adds appropriate weights to the elements of $A$.
If a question $A$ is of size at most $a=\lfloor 1/\alpha\rfloor$, then the answer is {\sc no}  and weight $1$ is given to all elements of $A$. If $|A|>a$, the answer is still {\sc no}  and weight $a/\lfloor k/\alpha \rfloor$ is given to the elements of $A$. Thus after some $r$ questions the sum of the weights is at most $ra$.
If an element reaches weight $1$, then the adversary says that it is not defective, and the element is deleted from $S'$. The adversary does that until there are still $ca$ elements in $S'$ but in the next step $S'$ would become smaller than this threshold (the exact value of $c$ will be determined later). Up to this point the number of elements thrown away is at least $n-ca-\lfloor k/\alpha \rfloor$, thus the number of queries is at least $\frac{n}{a}-c-\lfloor k/\alpha \rfloor/a\ge \frac{n}{a}-c-k$.

Let the set system $\cal F$ consist of the sets that were asked up to this point and let ${\cal F}' = \{ F\cap S' \; | \; F\in {\cal F} , |F|> a \} $.

The following observations are easy to check.

\begin{lemma} \
\begin{itemize}
\item $|S'| \ge ca$.
\item Every set $F \in \cal F'$ has size at most $\lfloor k/\alpha \rfloor\le k(a+1)\le 2ka$ 
\item Every element of $S'$ is contained in at most $\lfloor k/\alpha \rfloor/a\le k(1+1/a)\le 2k$ sets of $\cal F'$.
\item Every $k$-set that intersects each $F\in \cal F'$ in at most one element is a possible set of defective elements.
\end{itemize}
\end{lemma}

Now let $l:=2k$, $\beta:=2k$, and $c:=c(k,l,\beta )=k\beta ( 2^{kl}-1)=2k^2(2^{2k^2}-1)$.
By the observations above, we can apply Lemma \ref{domlemma} with $\cal H=\cal F'$. The lemma guarantees the existence of heaps $K_1,K_2, \ldots , K_k$ of size $\beta a\ge a$, such that every transversal of the $K_i$'s is a possible $k$-set of defective elements. Now by applying Proposition \ref{trivi2} with $A_i=K_i$ and $A_0=S\setminus S'$, we obtain that the questioner needs to ask at least $\lceil m\log a \rceil$ more queries to find $m$ defective elements.

Altogether the questioner had to use at least $\frac{n}{a}-c-k+m\log a$ queries, which proves the lower bound, since the number $c$ depends only on $k$ (the constant in the theorem is $c_1(k)=c+k$).
\qedbox \end{proof}

The constant in the lower bound is quite large, by a more careful
analysis one might obtain a better one. For example, we could redefine
the weights, such that we give weight $a/|A|$ to the elements of
$A$, thus still distributing weight at most $a$ per asked set.

It is also worth observing that if $1/\alpha$ is an integer, then we can use
Lemma \ref{domlemma} with $l=\beta=k$, instead of $l=\beta=2k$. This way one can prove
stronger results for small values of $k$ and $m$ if $1/\alpha$ is
an integer. We demonstrate it for $k=2$ in the next section. The following claim
is easy to check.

\begin{claim}\label{c1claim}
Let $\cal H$ be a set system on an underlying set $S$ of size $3a$, consisting of disjoint sets of size at most $2a$. Then we can select $2$ disjoint subsets of $S$ (called heaps) $K_1,K_2$ of size at least $a$, such that every set of $\cal{H}$ intersects at most one heap.
\end{claim}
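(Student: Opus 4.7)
The plan is to reduce the claim to a clean partition statement about disjoint set families, and then settle that statement by a short case analysis.

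First I would normalize the configuration so that the sets of $\mathcal{H}$ cover all of $S$. If some element $x \in S$ lies in no set of $\mathcal{H}$, add the singleton $\{x\}$ to $\mathcal{H}$. The enlarged family still consists of pairwise disjoint sets of size at most $2a$ (using $a \ge 1$), and any pair of heaps valid for the enlarged family is automatically valid for the original one. So we may assume $\sum_{H \in \mathcal{H}} |H| = 3a$.

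Next I would prove the following partition statement: $\mathcal{H}$ can be split into two subfamilies $\mathcal{H} = \mathcal{H}_1 \cup \mathcal{H}_2$ such that $a \le |\bigcup \mathcal{H}_i| \le 2a$ for $i=1,2$. Given such a split, set $K_i := \bigcup \mathcal{H}_i$. The $K_i$'s are disjoint, since the sets in $\mathcal{H}$ are pairwise disjoint, each has size in $[a,2a]$, and every $H \in \mathcal{H}$ lies entirely in one of them, so intersects at most one heap. That proves the claim.

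The partition itself I would produce by distinguishing two cases. If some $H^* \in \mathcal{H}$ has $|H^*| \ge a$, put $\mathcal{H}_1 := \{H^*\}$ and $\mathcal{H}_2 := \mathcal{H} \setminus \{H^*\}$; then $s_1 = |H^*| \in [a,2a]$ by hypothesis, and $s_2 = 3a - s_1 \in [a,2a]$. Otherwise every set of $\mathcal{H}$ has size strictly less than $a$; in that case I would list the sets of $\mathcal{H}$ in arbitrary order and add them one by one to $\mathcal{H}_1$ until the running total first reaches $a$ (this happens because the total mass is $3a \ge a$). At the stopping moment $s_1 \ge a$, while the previous value was $< a$ and the last set inserted has size $< a$, so $s_1 < 2a$. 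Assigning the remaining sets to $\mathcal{H}_2$ gives $s_2 = 3a - s_1 \in (a,2a]$.

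There is no serious obstacle in this proof; the only point requiring mild care is the existence of elements of $S$ outside $\bigcup \mathcal{H}$, which the singleton extension in the first step disposes of cleanly so that the partition argument does not need to juggle "free" elements separately.
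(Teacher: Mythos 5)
Your proof is correct. The paper itself offers no argument for this claim (it is merely declared ``easy to check''), so there is nothing to compare against; your reduction --- padding $\mathcal{H}$ with singletons so it partitions $S$, then splitting the family into two groups of total size in $[a,2a]$ either by isolating a single large set or by a greedy accumulation bounded above by $a + (a-1) < 2a$ --- is a clean and complete verification, with all edge cases (empty family, a set of size exactly $2a$) handled by the two-case split.
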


\section{The case $k=2$}

In this section we determine the exact value $g(n,2,\alpha,1)$.
Let $\delta=\lfloor 2 \{ \frac{1}{\alpha} \} \rfloor $, where $\{ x \}$ denotes the fractional part of $x$.

Consider the following algorithm W, where $n$ denotes the number of remaining elements:

If $n\le 2^{\lceil\log a\rceil}+1$, we ask a question of size $\lfloor n/2\rfloor\le a$, then depending on the answer we continue in the part that contains at least one defective element, and find that with binary search.

If $2^{\lceil\log a\rceil} + 2\le n\le 2^{\lceil\log
a\rceil+1}+1$, then we ask a question of size $2^{\lceil\log
a\rceil}+1$ (this falls between $a$ and $2a+1$). If the answer is
{\sc yes}, we put an element aside and continue with the remaining
elements of the set we asked, otherwise we continue with the
elements not in the set we asked. This way independent of whether
we got a {\sc yes} or {\sc no} answer, we have at most
$2^{\lceil\log a\rceil}$ elements with at least one defective,
hence we can apply binary search.

If $2^{\lceil\log a\rceil+1}+2\le n\le 3a + \delta + 2^{\lceil\log
a\rceil} $, then first we ask a question of size $2a+\delta$. If
the answer is {\sc yes}, we put an element aside and continue with
the remaining elements of the set we asked, otherwise we continue
with the elements not in the set we asked. This way independent of
whether we got a {\sc yes} or {\sc no} answer, we have at most
$2^{\lceil\log a\rceil} + a$ elements with at least one defective.
We continue with a set of size $a$, and after that we can finish
with binary search.

If $n\ge 3a + \delta + 2^{\lceil\log a\rceil}+1$, then we ask a
question of size $a$. If the answer is {\sc no}, we proceed as
above. If the answer is {\sc yes}, we can find a defective element
with at most $\lceil\log a\rceil$ further questions.

Counting the number of questions used in each case, we can conclude.

\begin{claim}\label{egyes} If $n\le 3a + \delta + 2^{\lceil\log
a\rceil}$, then algorithm W takes only $\lceil\log (n-1)\rceil$ questions,
thus according to Theorem \ref{domotor} it is optimal.
\end{claim}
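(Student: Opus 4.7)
The plan is to prove the claim by running through the three cases in the definition of algorithm W and counting, in each, the worst-case number of queries used. The matching lower bound $\lceil\log(n-1)\rceil$ is immediate from Theorem \ref{domotor} with $k=2$, so only the upper bound requires verification.

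Two counting ingredients will be used throughout. First, a set of $s\le 2a$ elements known to contain at least one defective can be searched by plain binary search in $\lceil\log s\rceil$ queries, since all subqueries split the remaining set in halves and hence have size at most $a$, which makes every "yes"-threshold equal to $1$. Second, and crucially, a set of $s$ elements known to contain \emph{both} defectives can be searched for one defective in only $\lceil\log(s-1)\rceil$ queries by the set-aside trick from the proof of Theorem \ref{domotor}: remove one element, then binary-search the remaining $s-1$ elements, which still contain at least one defective. This distinction is what lets the algorithm meet $\lceil\log(n-1)\rceil$ rather than the naive $\lceil\log n\rceil$.

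In Case 1 ($n\le 2^{\lceil\log a\rceil}+1$) the initial query of size $\lfloor n/2\rfloor\le a$ produces either a {\sc yes}-set of size $\lfloor n/2\rfloor$ with $\ge 1$ defective (finished in $\lceil\log\lfloor n/2\rfloor\rceil$ binary-search queries) or a {\sc no}-set of size $\lceil n/2\rceil$ still containing both defectives (finished in $\lceil\log(\lceil n/2\rceil-1)\rceil$ queries via the set-aside trick); both totals are at most $\lceil\log(n-1)\rceil$, and the tight point is $n=2^{\lceil\log a\rceil}+1$, where the set-aside trick is essential. For Cases 2 and 3, one must first check that the prescribed query sizes $2^{\lceil\log a\rceil}+1$ and $2a+\delta$ satisfy $\lceil\alpha\cdot|Q|\rceil=2$, so that a {\sc yes} really places both defectives inside the queried set; this is precisely what the definition $\delta=\lfloor 2\{1/\alpha\}\rfloor$ guarantees, since it ensures $\alpha(2a+\delta)\le 2<\alpha(2a+\delta+1)$. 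Then putting one element aside still leaves $\ge 1$ defective, and a routine size calculation shows the remaining instance (after one query in Case 2, after two queries in Case 3) is a set of at most $2^{\lceil\log a\rceil}$ elements with at least one defective, finished off by binary search in $\lceil\log a\rceil$ queries. The totals $1+\lceil\log a\rceil$ and $2+\lceil\log a\rceil$ match $\lceil\log(n-1)\rceil$ on the defining ranges $[2^{\lceil\log a\rceil}+2,\,2^{\lceil\log a\rceil+1}+1]$ and $[2^{\lceil\log a\rceil+1}+2,\,3a+\delta+2^{\lceil\log a\rceil}]$ respectively.

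The only non-routine steps are the verification in Cases 2 and 3 that a {\sc yes} answer on a set of the prescribed size really forces \emph{both} defectives into it (for which the definition of $\delta$ is the crucial ingredient), and the handling of the extremal point $n=2^{\lceil\log a\rceil}+1$ in Case 1, where naive binary search on the {\sc no}-half would overshoot by one and only the $k=2$ set-aside trick closes the gap.
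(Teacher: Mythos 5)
Your proposal is correct and is essentially the proof the paper intends: the paper gives no details beyond ``counting the number of questions used in each case,'' and your case-by-case count (with the verification that $\delta$ makes $2a+\delta$ the largest size on which a {\sc yes} certifies both defectives) supplies exactly that. Your observation that at $n=2^{\lceil\log a\rceil}+1$ naive binary search on the {\sc no}-part would cost $\lceil\log n\rceil$ rather than $\lceil\log(n-1)\rceil$, and that one must exploit $k=2$ (by setting an element aside, or equivalently by noting that the final two surviving candidates are both defective), is a genuine detail the paper glosses over, and you handle it correctly.
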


In fact a stronger statement is true. Note that the following
theorem does not contradict to Conjecture \ref{sej1}, as the
algorithm mentioned there uses the same number of steps as
algorithm W in case $k=2$, $1/\alpha$ is an integer and $\lceil
n/a\rceil\ge 4$.

\begin{theorem} Algorithm W is optimal for any $n$.
\end{theorem}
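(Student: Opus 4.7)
The plan is to prove the matching lower bound $g(n,2,\alpha,1)\ge W(n)$, where $W(n)$ denotes the number of queries algorithm W uses on $n$ elements; the upper bound is immediate from the algorithm's description. I would argue by strong induction on $n$, splitting into a small-$n$ regime handled by Claim \ref{egyes} and a large-$n$ regime handled by a reduction to smaller instances.

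For $n\le 3a+\delta+2^{\lceil\log a\rceil}$, Claim \ref{egyes} gives $W(n)=\lceil\log(n-1)\rceil$, which coincides with the information-theoretic lower bound of Theorem \ref{domotor} for $k=2$. Hence optimality is immediate in this regime.

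For $n>3a+\delta+2^{\lceil\log a\rceil}$, algorithm W satisfies $W(n)=W(n-a)+1$, so it suffices to prove $g(n,2,\alpha,1)\ge g(n-a,2,\alpha,1)+1$, together with the monotonicity $g(n',2,\alpha,1)\ge g(n-a,2,\alpha,1)$ for all $n'\ge n-a$ (a statement I would verify separately by a direct adversary simulation). I would then analyse the first query $Q$ of an arbitrary algorithm. If $|Q|\le a$, the adversary answers {\sc no} and marks all elements of $Q$ as non-defective; any further query of the algorithm can be assumed to avoid $Q$ (since including known non-defectives only raises the yes-threshold), so the residual problem is an instance of size $n-|Q|\ge n-a$, and the induction closes. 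The genuinely delicate case is $|Q|>a$, where a {\sc no} answer does not definitively remove any element, since $Q$ may legitimately contain one of the two defectives. Here I would adopt an adversary that answers {\sc no} but maintains the full family of consistent placements, namely all $2$-subsets of $[n]$ meeting each previously asked large set in at most one element. To extract a matching lower bound I would combine the weighting scheme from the proof of Theorem \ref{maintheorem} with the sharper Claim \ref{c1claim} (extended suitably to non-integer $1/\alpha$ through the parameter $\delta$) to produce, after sufficiently many queries, two disjoint candidate heaps of size $a$, at least one of which must contain a defective; finishing then requires $\lceil\log a\rceil$ binary-search queries by Proposition \ref{trivi2}.

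The main obstacle is precisely this case $|Q|>a$: unlike in the small-query case, no elements are eliminated, so one cannot trivially reduce to a smaller instance, and the arithmetic must be tight enough to account for exactly one unit of progress per large query and the final $\lceil\log a\rceil$ term, with no slack. The parameter $\delta=\lfloor 2\{1/\alpha\}\rfloor$ appearing in the threshold $3a+\delta+2^{\lceil\log a\rceil}$ enters precisely here, measuring the additional room the questioner gains when $1/\alpha$ is not an integer; verifying that this is exactly the right amount of room, no more and no less, is where most of the technical work sits.
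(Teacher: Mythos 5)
Your base case and your treatment of a small first query track the paper's argument; the paper makes the reduction to $n-a$ elements rigorous by proving the stronger statement that W is optimal even among algorithms with access to an unlimited supply of extra known non-defective elements, which is essentially the monotonicity you propose to "verify separately." The genuine gap is in the case you yourself flag as the crux: a big first query answered {\sc no}. The tools you propose cannot close it. The weighting scheme of Theorem \ref{maintheorem} and Claim \ref{c1claim} are designed to produce candidate heaps after discarding a bounded number of elements and inherently lose an additive constant (this is exactly why Theorem \ref{maintheorem} carries the $c_1(k)$ term), whereas exact optimality of W leaves no room for any additive slack. Moreover, Claim \ref{c1claim} assumes the query sets are disjoint and the underlying set has size $3a$, neither of which holds for the adversary's residual configuration after a sequence of overlapping big queries. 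A big query answered {\sc no} eliminates no element, so no reduction of the form $g(n,2,\alpha,1)\ge g(n-a,2,\alpha,1)+1$ is available for such queries, and your induction does not close.

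The paper resolves this case by a global pair-counting argument, which is the idea missing from your proposal. It shows the smallest counterexample must have the form $n=2a+\delta+2^{\lceil\log a\rceil}+za+1$ and supposes a faster algorithm Z uses only $\lceil\log(2a+\delta-1)\rceil+z$ questions; the adversary answers {\sc no} to the first $z+1$ questions, which are forced to be big by Lemma \ref{nullas} (the analogue of your small-query reduction). The remaining $\lceil\log(2a+\delta-1)\rceil-1$ questions can single out at most $2^{\lceil\log(2a+\delta-1)\rceil-1}$ elements as possible outputs, so every pair disjoint from the $(z+1)$st question and from that output set must already have been excluded. But a big question answered {\sc no} excludes only the pairs entirely contained in it, at most ${2a+\delta \choose 2}$ of them, and $z{2a+\delta \choose 2}$ falls short of the roughly ${(z+1)a+\delta+1 \choose 2}$ pairs that need excluding. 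Some such tight, global counting over all the big questions at once (rather than per-query progress) is what your sketch would need to supply.
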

\begin{proof}
We prove a slightly stronger statement, that algorithm W is optimal even among those algorithms that have access to an unlimited number of extra non-defective elements. This is crucial as we use induction on the number of elements, $n$.

It is easy to check that the answer for a set that is greater than $2a+\delta$ is always {\sc no}, while if both defective elements are in a set of size $2a+\delta$, then the answer is {\sc yes}.
We say that a question is {\em small} if its size is at most $a$,
and {\em big} if its size is between $a+1$ and $2a+\delta$. Note that
small questions test if there is at least one defective element in the
set, while big questions test if both defective elements are in the
set. Suppose by contradiction that there exists an algorithm Z that is
better than W, i.e. there is a set of elements for which Z is
faster than W. Denote by $n$ the size of the smallest such set and
by $z(n)$ the number of steps in algorithm Z. We will establish
through a series of claims that such an $n$ cannot exist. It
already follows from Claim \ref{egyes} that $n$ has to be at least
$3a + \delta + 2^{\lceil\log a\rceil} +1$.

Note that for $n= 3a + \delta + 2^{\lceil\log a\rceil}$ algorithm
W uses $\lceil\log (n-1)\rceil=\lceil\log (2a+\delta -1)\rceil +1$
questions. An important tool is the following lemma.

\begin{lemma}\label{nullas} If $n \ge 3a + \delta + 2^{\lceil\log a\rceil} +1$, then algorithm Z has to start with a
big question. Moreover, it can ask a small question among the
first $z(n)-\lceil \log (2a+\delta-1)\rceil$ questions only if one
of the previous answers was {\sc yes}.
\end{lemma}
\begin{proof}
First we prove that algorithm Z has to start with a big question.
Suppose it starts with a small question. We show that in case the
answer is {\sc no}, it cannot be faster than algorithm W. In this
case after the first answer there are at least $n-a$ (and at most
$n-1$) elements which can be defective, and an unlimited number of
non-defective elements, including those which are elements of the
first question. By induction algorithm W is optimal in this case,
and one can easily see that it cannot be faster if there are more
elements, hence algorithm Z cannot be faster than algorithm W on
$n-a$ elements plus one more question. On the other hand algorithm
W clearly uses this many questions (as it starts with a question
of size $a$), hence it cannot be slower than algorithm Z.

Similarly, to prove the moreover part, suppose that the first
$z(n)-\lceil \log (2a+\delta-1)\rceil$ answers are {\sc no} and
one of these questions, $A$ is small. Let us delete every element
of $A$. By induction algorithm W is optimal on the remaining at
least $n-a$ elements, hence similarly to the previous case,
algorithm Z uses more questions than algorithm W on $n-a$
elements, hence cannot be faster than algorithm W. More precisely,
we can define algorithm W', which starts with asking $A$, and
after that proceeds as algorithm W. One can easily see that
algorithm W' cannot be slower than algorithm Z or faster than
algorithm W.
\qedbox \end{proof}

Note that a {\sc yes} answer would mean that $\lceil \log
(2a+\delta-1)\rceil$ further questions would be enough to find a
defective with binary search, hence in the worst case, (when the
most steps are needed) no such answer occurs among the first
$z(n)-\lceil \log (2a+\delta-1)\rceil$ questions anyway. Now we
can finish the proof of the theorem with the following claim.

\begin{claim}\label{harmas} If $n>3a+\delta+2^{\lceil \log
a\rceil}$, then algorithm W is optimal.
\end{claim}
\begin{proof} If not, then the smallest $n$ for which W is
not optimal must be of the form $2a+\delta+2^{\lceil \log
a\rceil}+za+1$, where $z\ge 1$ integer. (This follows from the
fact that the number of required questions is monotone in $n$ if
we allow the algorithm to have access to an unlimited number of
extra non-defective elements.) By contradiction, suppose that
algorithm Z uses only $\lceil\log(2a+\delta-1)\rceil +z$
questions. Suppose the answer to the first $z$ questions are {\sc
no}. Then by to Lemma \ref{nullas}, these questions are big.
Suppose that the $z+1$st answer is also {\sc no}. We distinguish
two cases depending on the size of the $z+1$st question $A$. In
both cases we will use reasoning similar to the one in Theorem
\ref{approxi}

Case 1. The $z+1$st question is small. After the answer there are
$\lceil\log(2a+\delta-1)\rceil-1$ questions left, so depending on
the answers given to them, any deterministic algorithm can choose
at most $2^{\lceil\log(2a+\delta-1)\rceil-1}$ elements. Hence
algorithm Z gives us after the $z+1$st answer a set $B$ of at most
$2^{\lceil\log(2a+\delta-1)\rceil-1}$ elements, which contains a
defective.

Before starting the algorithm, all the ${n \choose 2}$ pairs are
possible candidates to be the set of defective elements. However,
after the $z+1$st question (knowing the algorithm) the only
candidates are those which intersect $B$. The $z+1$st question
shows at most $a$ non-defective elements, but all the pairs which
intersect neither $A$ nor $B$ have to be excluded by the first $z$
questions. Thus ${n-|A|-|B| \choose 2} \ge{2a+\delta+2^{\lceil
\log a\rceil}+za+1-a-2^{\lceil\log(2a+\delta-1)\rceil-1}\choose
2}\ge {(z+1)a+\delta+1 \choose 2}$ pairs should be excluded, but
$z$ questions can exclude at most $z{2a+\delta \choose 2}$ pairs,
which is less if $z\ge 1$.

Case 2. The $z+1$st question is big. After it we have
$\lceil\log(2a+\delta-1)\rceil-1$ questions left, so depending on
the answers given to them, any deterministic algorithm can choose
at most $2^{\lceil\log(2a+\delta-1)\rceil-1}$ elements. This means
that we have to exclude with the first $z+1$ questions at least
${2a+\delta+2^{\lceil \log a\rceil}+za+1
-2^{\lceil\log(2a+\delta-1)\rceil -1}\choose 2}\ge
{(z+2)a+\delta+1\choose 2}$ pairs. But they can exclude at most
$(z+1){2a+\delta \choose 2}$ pairs, which is less if $z\ge 1$.
\qedbox \end{proof}

This finishes the proof of the theorem.
\qedbox \end{proof}

\section{Open problems}

It is quite natural to think that $g(n,k,\alpha, m)$ is increasing in
$n$ but we did not manage to prove that. The monotonicity in $k$
and $m$ is obvious from the definition.
On the other hand, we could have defined $g(n,k,\alpha,m)$ as the smallest number of questions needed to find $m$ defectives assuming there are \emph{exactly} $k$ defectives (instead of \emph{at least} $k$ defectives) among the $n$ elements, in which case the monotonicity in $k$ is far from trivial. We conjecture
that this definition gives the same function as the original one.

It might seem strange to look for monotonicity in $\alpha$, but we
have seen that for $m=1$ we can reach the information theoretic
lower bound (which is $\lceil \log (n-k+1) \rceil$ in this
setting) for $\alpha\le 2/(n-k+1)$. All the theorems from Section
\ref{mbarmi} also suggest that the smaller $\alpha$ is, the faster
the best algorithm is even for general $m$. Basically in case of a
{\sc no} answer it is better if $\alpha$ is small, and in case of
a {\sc yes} answer the size of $\alpha$ does not matter very much,
since the process can be finished fast. However, we could only
prove Theorem \ref{maintheorem} concerning this matter.

Another interesting question is if we can choose $\alpha$. If
$m=1$ then we should choose $\alpha\le 1/(n-k+1)$, and as we have
mentioned in the previous paragraph, we believe that a small enough
$\alpha$ is the best choice.

Another possibility would be if we were allowed to choose a new
$\alpha$ for every question. Again, we believe that the best
solution is to choose the same, small enough $\alpha$ every time.
This would obviously imply the previous conjecture.

Finally, a more general model to study is the following. We are given two parameters, $\alpha\ge\beta$. If at least an $\alpha$ fraction of the set is defective, then the answer is {\sc yes}, if at most a $\beta$ fraction, then it is {\sc no}, while in between the answer is arbitrary. With these parameters, this paper studied the case $\alpha=\beta$. This model is somewhat similar to the threshold testing model of \cite{damaschke}, where  instead of ratios $\alpha$ and $\beta$ they have fixed values $a$ and $b$ as thresholds.

\end{document}